\newtheorem{theorem}{Theorem}[section]
\newtheorem{definition}{Definition}[section]
\newtheorem{prop}{Proposition}[section]
\newtheorem{remark}{Remark}[section]
\newtheorem{lemma}{Lemma}[section]
\newtheorem{cor}{Corollary}[section]
\numberwithin{equation}{section}
\newcommand{\w}{\omega}
\newcommand{\mK}{\mathcal K}
\newcommand{\bw}{\mbox{\boldmath $\omega$}}
\newcommand{\di}{\mbox{div }}
\newcommand{\curl}{\mbox{curl }}
\title{Global existence of a weak solution of the incompressible Euler equations
with helical
symmetry and $L^p$ vorticity}
\author[A. C. Bronzi]{A. C. Bronzi}
\address[Anne C. Bronzi]{Deptartment of Mathematics\\
Statistics and Computer Science\\
University of Illinois \\
Chicago, IL 60607, U.S.A.}
\email{annebronzi@gmail.com}
\author[M. C. Lopes Filho]{M. C. Lopes Filho}
\address[M. C. Lopes Filho]{Instituto de Matem\'atica\\
Universidade Federal do Rio de Janeiro\\
Cidade Universit\'aria -- Ilha do Fund\~ao\\
Caixa Postal 68530\\
21941-909 Rio de Janeiro, RJ -- Brasil.}
\email{mlopes@im.ufrj.br}
\author[H. J. Nussenzveig Lopes]{H. J. Nussenzveig Lopes}
\address[H. J. Nussenzveig Lopes]{Instituto de Matem\'atica\\
Universidade Federal do Rio de Janeiro\\
Cidade Universit\'aria -- Ilha do Fund\~ao\\
Caixa Postal 68530\\
21941-909 Rio de Janeiro, RJ  -- Brasil.}
\email{hlopes@im.ufrj.br}
\begin{document}
\maketitle

{\bf MSC Subject Classifications:} 35Q31, 76B47.


{\bf Keywords:} Helical symmetry, Euler
equations, vorticity.

\vspace{0.5cm}

\pagestyle{plain}

\begin{abstract}
We prove the global existence of a helical weak solution of the 3D Euler equations,
in full space,
for an initial velocity with helical symmetry, without swirl and whose
initial vorticity is compactly supported in the axial plane and belongs to
$L^p$, for some $p>\frac{4}{3}$.
This result is an extension of the existence part of the work of B. Ettinger and
E. Titi, \cite{Titi}, who studied
well-posedness of the Euler equations with helical symmetry without swirl, with
bounded initial vorticity, in a helical pipe.
\end{abstract}

\section{Introduction}
\par Consider the 3D incompressible Euler equations,
describing the motion
of an ideal incompressible fluid in $\mathbb R^3$ with initial velocity
$u^0=u^0(x)$,
\begin{eqnarray}\label{E}
\left\{\begin{array}{l}
\partial_t u + (u\cdot\nabla)u+\nabla p=0\\
\mbox{\textit{div} }u=0\\
u(x,0)=u^0(x).
\end{array}\right.
\end{eqnarray}
Above, $u=u(t,x)\in \mathbb R^3$ is the velocity field, $p=p(t,x)\in\mathbb R$
is
the scalar pressure and $(t,x)\in \mathbb R\times \mathbb R^3$.

Another physically relevant dynamic variable for incompressible flows is $\omega
= \omega(t,x) = \mbox{ curl }u(t,x)$,
the {\it vorticity}, which satisfies the evolution equation:
\begin{equation} \label{Vor}
\partial_t \omega + (u \cdot \nabla) \omega = (\omega \cdot \nabla) u.
\end{equation}

Existence of global smooth solutions for the incompressible 3D Euler equations
with smooth initial
data is an important open problem, with a large associated literature. Local (in
time) existence with smooth
initial data is known, see \cite{Kato}. Global existence of a weak solution has
only recently been established
by E. Wiedemann in \cite{W2012} in the context of {\it wild solutions}, see also
\cite{DeSz1},
for arbitrary initial divergence-free velocity in $L^2$. The vorticity
associated with wild solutions does not, in principle, satisfy
equation \eqref{Vor}, so that another important open problem is the global
existence of a weak solution of equation
\eqref{E} whose vorticity is also a weak solution of equation \eqref{Vor};  we
will refer to such solutions as {\it tame} weak solutions.

In contrast, in dimension two, the Euler equations are globally well-posed for
smooth initial
data, see \cite{Majda}. This distinction is usually attributed to the term
$(\omega \cdot \nabla) u$,
in equation \eqref{Vor}, responsible for {\it vortex stretching}; this term
vanishes identically for 2D flows.
One way to explore the gap between 2D and 3D flows is to consider flows with
more complicated symmetries, such as axial symmetry and also helical symmetry.
There is a large literature devoted
to axisymmetric Euler flows, which naturally breaks down into two cases:
axisymmetric flows without swirl, and
the general case, with swirl. For axisymmetric flows without swirl, global
well-posedness of smooth solutions is due to A. Majda and to
X. St. Raymond, see \cite{M86,SR94,GZ07}. Global existence of tame weak
solutions was first proved by D. Chae and N. Kim
for initial vorticity in $L^p$, $p>6/5$ in \cite{CK97}. This result was later
improved to "near-vortex-sheet" data
in \cite{CI98}. For vortex sheet initial data the problem of existence remains
open, but in \cite{D92}, J.-M. Delort showed
that the argument used to obtain existence for vortex  sheet initial data in 2D
could not be extended to axisymmetric
flows without swirl, and further analysis of concentrations was performed in
\cite{JX06}. For axisymmetric flows with swirl, and for
general flows without any particular symmetry, the focus has been on criteria
and scenarios for blow-up, see \cite{C05,C08} and references therein.

Helical flows are flows invariant under a one-dimensional group of rigid motions
of Euclidean space generated by
a simultaneous rotation around an axis and a translation along the same axis.
Although as common in practice
as axisymmetric flows, helical flows have received a great deal less
mathematical attention. The literature specific to inviscid
flows with helical symmetry reduces to two articles, namely \cite{Dutrifoy},
where A. Dutrifoy
proved global well-posedness for smooth initial data and \cite{Titi}, where B.
Ettinger and E. Titi established global existence
and uniqueness of weak solutions with bounded initial vorticity; in both papers
a geometric condition, analogous to the no swirl
hypothesis for axisymmetric flows, is assumed. In general, helical flows have a
more decided 2D nature than axisymmetric flows; for example,
global existence for helical Navier-Stokes is known, see \cite{MTL90}, whereas
this same problem is open for axisymmetric
Navier-Stokes. One reason that helical flows have received less attention might
be that the helical symmetry reduction is
algebraically more complicated. The present article is part of a research
programme aimed at investigating incompressible helical flows
with a view towards both the inviscid singularity problem, and the mathematical
treatment of weak solutions. Specifically, our main
result in this article is an analogue of the $L^p$ axisymmetric existence result
of Chae and Kim; we obtain the critical
exponent $4/3$ for helical flows in the full space. Previous work on helical
flow has focused on bounded helical
domains; we choose to carry out our analysis in the full space. Although this
choice creates technical complications which
we are forced to address here, the full space case is conceptually simpler, and
can be connected in a more physically natural way
with small viscosity flows.

This article is organized as follows. In Section \ref{sec1} we fix notation,
introduce basic definitions and we state
some well-known results. In Section \ref{sec2} we develop basic tools to treat
full space helical flows.
More precisely, in Section \ref{secbiot} we derive a formula for the Green's
function for the Laplacian in
$\mathbb R^3$, periodic in the $x_3$-direction, which is used to write
explicitly the
relevant Biot-Savart law. In Section \ref{sec2.3} we introduce the definition of
weak solution and in
Section \ref{sec2.4} we state and prove a local well-posedness result for smooth
solutions to problem (\ref{E}).
Finally, in sections \ref{sec3} and \ref{sec4} we prove two versions of the main
theorem, first assuming the integral
of the third component of vorticity vanishes, which implies velocity vanishes at
infinity and greatly simplifies the
analysis, and then, second, without this hypothesis.

\section{Preliminaries}\label{sec1}
Our purpose in this section is to fix notation, introduce the basic definitions
and
to recall some known results, taken mainly from \cite{Titi}, for convenience of
the reader.

Let $\kappa$ be a positive constant and define $\Omega:=\mathbb R^2\times
(-\pi\kappa,\pi\kappa)$.
We denote by $\tilde x$ the first two components of $x\in \mathbb R^3$, i.e., if
$x=(x_1,x_2,x_3)$ then
$\tilde x=(x_1,x_2)$. Let $L_{per}^p(\Omega)$
and $H_{per}^n(\Omega)$ denote the spaces of functions $f=f(x)$, $x \in \mathbb
R^3$, which are periodic in the $x_3$-direction with period
$2\kappa \pi$ and which belong, respectively, to $L^p(\mathcal O)$ and
$H^n(\mathcal O)$, for all open sets of the form $\mathcal O=\mathbb R^2\times
I$, with $I\subset \mathbb R$ a bounded open interval. Let $V_{per}^n(\Omega)$
be the
space of all vector-valued functions in $H_{per}^n(\Omega)$ which are divergence free and let
$L_{c,per}^p(\Omega)$ denote the space of functions in $L_{per}^p(\Omega)$ which
are
compactly supported in $\tilde x$, i.e. such that, for almost all $x_3\in
\mathbb R$, the support of  $f(\cdot,x_3)$
is a compact subset of $\mathbb R^2$.

We write $R_{\theta}$ for the rotation around the $x_3$-axis by
the angle $\theta$,
 \[R_{\theta}=\left(
\begin{array}{ccc}
 \cos \theta& \sin\theta & 0 \\
-\sin \theta& \cos \theta & 0 \\
0 & 0 & 1\end{array}\right),\] and we denote by $S_{\theta}$ the superposition
of $R_{\theta}$ and a
translation along the $x_3$-direction of size $\kappa \theta$,
\[S_{\theta}(x)=R_{\theta}(x)+\left(
\begin{array}{lll}
0 \\
0 \\
\kappa \theta\end{array}\right).\]

\begin{definition}\label{helical}
A vector field $u: \mathbb R^3\rightarrow \mathbb R^3$ is said to be {\it
helical} if and only
if
\begin{equation}\label{Stheta}
u(S_{\theta}x)=R_{\theta}u(x)\mbox{ for all }\theta\in\mathbb R \mbox{ and }x\in
\mathbb R^3.
\end{equation}
\end{definition}
Observe that, if a vector field $u$ is helical, then it is periodic in the
$x_3$-direction with period $2\kappa\pi$.

\begin{definition}\label{geo}
Set
\[\xi=\xi(x)\equiv (x_2,-x_1,\kappa).\]
Given a vector field $u: \mathbb R^3\rightarrow \mathbb R^3$, we define  the
helical swirl $\eta$ as  \[\eta(x)=u(x)\cdot \xi(x) \mbox{ for all }x\in
\mathbb R^3.\] If $\eta\equiv 0$ then we say that
$u$ has vanishing helical swirl.
\end{definition}

Next, we state some properties of helical flows. The
proofs can be found in \cite{Titi}.

\begin{lemma}\label{derivative} A smooth vector field $u:\mathbb R^3\rightarrow
\mathbb R^3$ is helical if and only if $(\xi(x)\cdot
\nabla)u(x)=(u_2(x),-u_1(x),0)$.
\end{lemma}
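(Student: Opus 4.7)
The plan is to treat this as an infinitesimal version of the helical symmetry relation \eqref{Stheta}, in the spirit of recovering a Lie group condition from its Lie algebra generator. The forward direction is an immediate differentiation; the reverse direction is a short ODE uniqueness argument.

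For the forward direction, I would fix $x\in\mathbb R^3$ and differentiate both sides of $u(S_\theta x)=R_\theta u(x)$ with respect to $\theta$ at $\theta=0$. Write $J:=\tfrac{d}{d\theta}R_\theta\big|_{\theta=0}$, the skew matrix with $Je_1=-e_2$, $Je_2=e_1$, $Je_3=0$, so that $Jy=(y_2,-y_1,0)$ for any $y\in\mathbb R^3$. A direct computation gives
\[
\frac{d}{d\theta}\Big|_{\theta=0}S_\theta x
=Jx+(0,0,\kappa)=(x_2,-x_1,\kappa)=\xi(x),
\]
hence the left side becomes $(\xi(x)\cdot\nabla)u(x)$, while the right side becomes $Ju(x)=(u_2(x),-u_1(x),0)$. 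This yields the stated identity.

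For the converse, assume $(\xi\cdot\nabla)u=(u_2,-u_1,0)=Ju$ pointwise, fix $x$, and set
\[
v(\theta):=u(S_\theta x),\qquad w(\theta):=R_\theta u(x).
\]
The key observation is that the tangent to the orbit at every point is exactly the helical vector field: since $S_\theta$ is a one-parameter group generated by $J$ plus the translation $(0,0,\kappa)$, one checks by a direct computation in components that $\tfrac{d}{d\theta}S_\theta x=\xi(S_\theta x)$ for every $\theta$. Combined with the hypothesis applied at the moving point $S_\theta x$, this gives
\[
v'(\theta)=(\xi(S_\theta x)\cdot\nabla)u(S_\theta x)=Ju(S_\theta x)=Jv(\theta).
\]
On the other hand, $R_\theta$ being a one-parameter group with generator $J$ gives $w'(\theta)=Jw(\theta)$. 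Thus $v$ and $w$ both solve the same linear ODE $y'=Jy$ with the common initial value $y(0)=u(x)$, so uniqueness forces $v\equiv w$, which is exactly \eqref{Stheta}. Since $\theta$ and $x$ were arbitrary, $u$ is helical.

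No step is genuinely difficult; the only place requiring a bit of care is verifying the identity $\dot S_\theta x=\xi(S_\theta x)$ globally in $\theta$ (not just at $\theta=0$), because it is this equivariance of the generator along the orbit that lets the pointwise infinitesimal condition integrate up to the full symmetry.
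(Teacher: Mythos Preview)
Your proof is correct. The forward direction is the natural differentiation of \eqref{Stheta} at $\theta=0$, and for the converse the verification that $\tfrac{d}{d\theta}S_\theta x=\xi(S_\theta x)$ for all $\theta$ (not just $\theta=0$) is exactly the point that makes the ODE argument go through; you identified this correctly. One small remark: since $J$ coincides with the matrix $\mathcal R$ used later in the paper, you could phrase the identity as $(\xi\cdot\nabla)u=\mathcal R u$, which ties in nicely with Lemma~\ref{w_3}.

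As for the comparison: the paper does not actually prove Lemma~\ref{derivative}. It states the result and refers the reader to \cite{Titi} for the proof (see the sentence preceding the lemma). So there is no in-paper argument to compare against. Your approach---differentiate along the one-parameter group to get the infinitesimal condition, then integrate the linear ODE $y'=\mathcal R y$ along orbits for the converse---is the standard one and is essentially what appears in \cite{Titi}.
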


\begin{lemma}\label{w_3}
Let $u = u(x,t)$ be a smooth solution of \eqref{E}, helical and with vanishing
helical swirl
and let $\bw = \mbox{ curl } u$. Then $\bw(x,t)=\w(x,t)\xi(x)/\kappa$,
where
$\w(x,t)=\partial_1u_2(x,t)-\partial_2u_1(x,t)$, and $\bw$ satisfies the
following
equation:

\begin{eqnarray}\label{eqlem}
\partial_t \bw+(u\cdot \nabla)\bw -\dfrac{\w}{\kappa}\mathcal R u=0,
\end{eqnarray}
where $\mathcal R=\left(
\begin{array}{rrr}
 0& 1 & 0 \\
-1 & 0 & 0 \\
0 & 0 & 0\end{array}\right)$.
\end{lemma}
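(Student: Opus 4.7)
The statement has two parts: the pointwise identity $\bw = (\w/\kappa)\xi$, and the evolution equation. I would establish the identity first, using the two structural hypotheses (vanishing helical swirl and helicity), and then derive the equation by substituting into the standard 3D vorticity equation.

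For the identity, the vanishing helical swirl $u\cdot\xi = x_2 u_1 - x_1 u_2 + \kappa u_3 = 0$ lets me solve
\[
u_3 = \frac{x_1 u_2 - x_2 u_1}{\kappa}.
\]
Substituting this into $\bw_1 = \partial_2 u_3 - \partial_3 u_2$ and $\bw_2 = \partial_3 u_1 - \partial_1 u_3$, the remaining $\partial_3$ terms can be eliminated using Lemma \ref{derivative}, which expresses $\kappa\partial_3 u_i$ as a linear combination of $\partial_1 u_i$, $\partial_2 u_i$ and the components $u_1,u_2$. After the cancellations, I expect $\bw_1 = (x_2/\kappa)\w$ and $\bw_2 = -(x_1/\kappa)\w$, which together with the tautological $\bw_3 = \w = \w\xi_3/\kappa$ gives the desired relation.

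For the evolution equation, I would take curl of the Euler momentum equation in \eqref{E} to obtain the standard vorticity equation \eqref{Vor}, namely $\partial_t\bw + (u\cdot\nabla)\bw = (\bw\cdot\nabla)u$. Using the identity just proved, the stretching term becomes
\[
(\bw\cdot\nabla)u = \frac{\w}{\kappa}(\xi\cdot\nabla)u,
\]
and Lemma \ref{derivative} immediately identifies $(\xi\cdot\nabla)u$ with $(u_2,-u_1,0) = \mathcal R u$. Rearranging yields \eqref{eqlem}.

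The only real work is the algebraic simplification in the first part, which hinges on a precise cancellation between the swirl-free constraint (which ties $u_3$ to $u_1,u_2$) and the helical derivative identity (which ties $\partial_3$ to $\partial_1, \partial_2$). I do not anticipate an analytic obstacle since $u$ is assumed smooth; once the ansatz $\bw\parallel\xi$ is in place, the evolution equation is essentially a one-line consequence of the standard 3D vorticity equation and Lemma \ref{derivative}.
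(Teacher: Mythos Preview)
Your approach is correct and is the natural one. Note, however, that the paper does not actually supply a proof of this lemma: it is stated among the preliminary facts with the remark that ``the proofs can be found in \cite{Titi}.'' That said, the computation you sketch for the identity $\bw=(\w/\kappa)\xi$ is essentially the smooth version of the distributional calculation the paper later carries out in the proof of Proposition~\ref{propr}, and your derivation of \eqref{eqlem} from the standard vorticity equation \eqref{Vor} via $(\bw\cdot\nabla)u=(\w/\kappa)(\xi\cdot\nabla)u=(\w/\kappa)\mathcal{R}u$ (using Lemma~\ref{derivative}) is exactly right.
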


\begin{remark} The third equation in system (\ref{eqlem}) is a transport
equation for $\w$, given by
\begin{eqnarray}\label{third}
\partial_t \w+(u\cdot \nabla)\w=0.
\end{eqnarray}
\end{remark}

Observe that, if  $u\in L^q(\mathbb R^3;\mathbb R^3)$, for some $1\leq q\leq
\infty$, then Definitions (\ref{helical}) and (\ref{geo}) still make sense if we ask
the equalities to hold for almost every $x\in \mathbb R^3$. Furthermore, in this
case we
have a result analogous to Lemma \ref{derivative}, which is stated below, and can
be proved in the same fashion.

\begin{lemma}\label{equiv}Let $u\in L_{loc}^q(\mathbb R^3;\mathbb R^3)$ for
some $1\leq q\leq \infty$. Then, $u$ is helical if and only if $u$ is
$2\pi\kappa$-periodic with respect to the third component, in the sense of
distributions,  and if
\begin{eqnarray}\label{form-equiv}
\int_{\Omega} (D\Psi(x)\xi(x))\cdot u(x)dx=\int_{\Omega}(\mathcal
R\Psi(x))\cdot u(x) dx,
\end{eqnarray}
for all $\Psi
\in \mathcal C^{\infty}_{c,per}(\Omega;\mathbb R^3)$.
\end{lemma}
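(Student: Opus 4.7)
The approach is to reduce Lemma \ref{equiv} to its smooth counterpart (Lemma \ref{derivative}) by regularizing $u$ via convolution with a mollifier $\eta_\epsilon$ that is invariant under rotations about the $x_3$-axis. Integrating by parts in (\ref{form-equiv}), using $\di \xi = 0$ and $\mathcal R^{T} = -\mathcal R$, one sees that (\ref{form-equiv}) is precisely the distributional form of the pointwise identity $(\xi\cdot\nabla)u=\mathcal R u$ appearing in Lemma \ref{derivative}. The task is therefore to transfer this equivalence from smooth fields to $L^{q}_{\mathrm{loc}}$ fields.

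The key technical step --- and the main obstacle --- is a commutator identity between the first-order operator $(\xi\cdot\nabla)$ and convolution with $\eta_\epsilon$. Setting $u^\epsilon := u*\eta_\epsilon$, integrating by parts in $y$ (using $\di\xi=0$) and exploiting the linearity relation $\xi(x)-\xi(y)=\mathcal R(x-y)$, the substitution $z=x-y$ yields
$$(\xi\cdot\nabla)u^\epsilon(x) - \bigl[(\xi\cdot\nabla)u\bigr]*\eta_\epsilon(x) = \int_{\mathbb R^{3}}\bigl[\nabla\eta_\epsilon(z)\cdot\mathcal R z\bigr]\,u(x-z)\,dz.$$
The rotational invariance of $\eta_\epsilon$ forces this to vanish: $\mathcal R z$ is tangent to the circles $\{R_\theta z:\theta\in\mathbb R\}$ on which $\eta_\epsilon$ is constant, so $\nabla\eta_\epsilon \cdot \mathcal R z \equiv 0$ identically. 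Hence $(\xi\cdot\nabla)u^\epsilon = [(\xi\cdot\nabla)u]*\eta_\epsilon$ in the sense of distributions.

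For the forward direction, taking $\theta=2\pi$ in (\ref{Stheta}) immediately gives the $2\pi\kappa$-periodicity of $u$ in $x_3$. Changing variables $y=S_\theta z$ in $u^\epsilon(S_\theta x)$ and using $S_\theta x - S_\theta z = R_\theta(x-z)$, the helical symmetry of $u$, and the $R_\theta$-invariance of $\eta_\epsilon$ yields $u^\epsilon(S_\theta x)=R_\theta u^\epsilon(x)$ pointwise; Lemma \ref{derivative} then gives $(\xi\cdot\nabla)u^\epsilon=\mathcal R u^\epsilon$ classically. Testing against any $\Psi\in\mathcal C^\infty_{c,per}(\Omega;\mathbb R^3)$, integrating by parts on $\Omega$, and using joint $x_3$-periodicity of $u^\epsilon$ and $\Psi$ to cancel the boundary contributions at $x_3=\pm\pi\kappa$ yields (\ref{form-equiv}) for $u^\epsilon$; letting $\epsilon\to 0$ with $u^\epsilon\to u$ in $L^q(\mathcal O)$ on any bounded $\mathcal O$ containing the relevant support (or weak-$*$ convergence when $q=\infty$) passes (\ref{form-equiv}) to $u$.

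For the converse, the $x_3$-periodicity of $u$ together with the $x_3$-independence of $\xi$ allows one to periodize any $\Phi\in C^\infty_c(\mathbb R^3;\mathbb R^3)$ as $\Phi^{per}(y):=\sum_n \Phi(y-(0,0,2\pi\kappa n))$ and thereby upgrade (\ref{form-equiv}) to the distributional identity $(\xi\cdot\nabla)u=\mathcal R u$ on all of $\mathbb R^3$. The commutator identity above then gives $(\xi\cdot\nabla)u^\epsilon=\mathcal R u^\epsilon$ pointwise, so by Lemma \ref{derivative} each $u^\epsilon$ is helical; selecting an almost-everywhere convergent subsequence $u^{\epsilon_n}\to u$ and using that $S_\theta$ preserves Lebesgue measure recovers (\ref{Stheta}) for $u$ in the almost-everywhere sense, for each fixed $\theta$.
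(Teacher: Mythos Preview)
Your argument is correct. The paper itself does not give a proof of this lemma; it merely asserts that it ``can be proved in the same fashion'' as Lemma~\ref{derivative}, whose proof is in turn deferred to \cite{Titi}. Your mollification strategy---using a rotationally invariant kernel so that the commutator $(\xi\cdot\nabla)(u\ast\eta_\epsilon)-\bigl((\xi\cdot\nabla)u\bigr)\ast\eta_\epsilon$ vanishes identically, and then invoking Lemma~\ref{derivative} on the smooth $u^\epsilon$---is a clean and complete way to carry out what the paper leaves implicit. The key computations (that $\xi(x)-\xi(y)=\mathcal R(x-y)$, that $\nabla\eta_\epsilon\cdot\mathcal R z\equiv 0$ under rotational invariance, and the periodization step to pass from $\Omega$ to $\mathbb R^3$) are all correct.

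A slightly more direct route, arguably closer to the spirit of ``in the same fashion,'' is to work entirely at the level of the weak identity: for the forward implication, pair the helical relation $u(S_\theta x)=R_\theta u(x)$ against a test function, change variables $y=S_\theta x$, and differentiate in $\theta$ at $\theta=0$ under the integral sign (using $\frac{d}{d\theta}S_\theta x\big|_{\theta=0}=\xi(x)$ and $\frac{d}{d\theta}R_\theta\big|_{\theta=0}=\mathcal R$) to obtain \eqref{form-equiv} directly. For the converse one still needs to integrate the resulting first-order relation along the helical flow, and your mollification argument is the natural way to make that step rigorous for $L^q_{\mathrm{loc}}$ fields. So the two approaches differ mainly in where the smoothing is applied; yours is perhaps more systematic, while the direct differentiation is marginally shorter for the forward direction.
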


\section{Vorticity formulation}\label{sec2}

In this section we state and prove some basic results concerning full-space
helical flows
which we will use in the proof of the existence theorems \ref{teores} and
\ref{teo2}.
We require an explicit vorticity formulation, and an appropriate Biot-Savart
law.
We begin with the following system
\begin{eqnarray}\label{green}\left\{\begin{array}{ll}
         \curl u=\bw \\
         \di u=0\\
         |u(x)|\rightarrow 0 \mbox{ as } |\tilde x|\rightarrow \infty \\
         u \mbox{ periodic in } x_3.
         \end{array}\right.
\end{eqnarray}

In order to obtain a solution for the system above we will derive an explicit
form of the Green's function for the Laplacian in $\mathbb R^3$, periodic in
the $x_3$-direction.
In addition, we introduce a definition of weak solution for the vorticity
equation \eqref{third} and, finally, we construct a smooth approximating
sequence for weak solutions of
the vorticity equation \eqref{third}.

\subsection{Biot-Savart law}\label{secbiot}
We will being by deriving the Green's
function for the Laplacian in $\mathbb R^3$ with periodic boundary conditions in
the $x_3$-direction. Using this Green's function we will write an expression for
the Biot-Savart kernel and prove
some estimates for this kernel. We will also provide a necessary and
sufficient condition for a velocity field,
associated with a given vorticity, to be helical and to have
vanishing helical swirl (see Proposition \ref{propr}).

 \begin{prop}\label{propgreen} The Green's function for the Laplacian in
$\mathbb
R^3$ with $2\pi\kappa$-periodic boundary condition in the $x_3$-direction is
given by
 \begin{equation}\label{greenfunc}
  G(x)=\dfrac{1}{2\pi\kappa^2}\sum_{n=1}^{\infty}K_0\left(\dfrac{
|\tilde
x|n}{\kappa}\right)\cos\left(\dfrac{nx_3}{\kappa}\right)-
\dfrac{1}{2\pi\kappa^2} \log |\tilde x|,
 \end{equation}
for all $x\in\mathbb R^3$, $\tilde x\neq 0$, where $K_0$ is the modified Bessel
function of the second kind and order zero.

In particular, if $\bw:\mathbb R^3\rightarrow \mathbb R^3$ belongs to
$L_{per}^2(\Omega)$ and if $\int_{\Omega}\bw(x)dx=0$, then the solution of
the Poisson system
\begin{eqnarray}\label{poisson}\left\{\begin{array}{ll}
         \Delta \Psi=\bw \\
         |\Psi(x)|\rightarrow 0 \mbox{ as } |\tilde x|\rightarrow \infty \\
         \Psi \mbox{ periodic in } x_3.
         \end{array}\right.
\end{eqnarray}
 is given by
\[\Psi(x)=-\dfrac{1}{4\pi^2}\int_{\Omega}G(x-y) \bw(y)dy, \;\mbox{ for all }x\in
\Omega. \]
\end{prop}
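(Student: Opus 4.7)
The plan is to derive the Green's function by exploiting the prescribed $2\pi\kappa$-periodicity in $x_3$ via Fourier series decomposition. Writing
\[
G(x)=\sum_{n\in\mathbb{Z}}g_n(\tilde x)\,e^{inx_3/\kappa},
\]
and recalling that the $2\pi\kappa$-periodic Dirac mass on the line admits the Fourier expansion $\frac{1}{2\pi\kappa}\sum_{n\in\mathbb{Z}}e^{inx_3/\kappa}$, applying $\Delta=\Delta_{\tilde x}+\partial_3^2$ and matching Fourier coefficients reduces the 3D problem to the planar family
\[
\Delta_{\tilde x}g_0=c\,\delta(\tilde x),\qquad \bigl(\Delta_{\tilde x}-n^2/\kappa^2\bigr)g_n=c\,\delta(\tilde x)\quad(n\neq 0),
\]
with a constant $c$ fixed by the normalization chosen for $G$ (ultimately dictated by the factor $-1/(4\pi^2)$ appearing in the Poisson formula).

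The $n=0$ equation is solved by the standard 2D logarithmic fundamental solution, producing the $\log|\tilde x|$ term. For each $n\neq 0$, the equation is a screened 2D Poisson problem whose radial fundamental solution is a constant multiple of the modified Bessel function $K_0(|n||\tilde x|/\kappa)$; this is classical and follows from $K_0$ solving the modified Bessel ODE and having the required logarithmic singularity at the origin to generate the Dirac mass. Pairing the modes $n$ and $-n$ via the evenness of $K_0$ in $n$ converts the complex exponential series into the cosine series of the statement, yielding the claimed formula.

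What remains is to check convergence of the series and to establish the Poisson representation. For convergence away from the axis $\tilde x=0$, the asymptotic $K_0(r)\sim\sqrt{\pi/(2r)}\,e^{-r}$ as $r\to\infty$ gives absolute, uniform convergence on compact subsets of $\{\tilde x\neq 0\}$; the singular behavior $K_0(r)\sim-\log r$ as $r\to 0$ is precisely what motivates the subtracted logarithm and ensures local integrability of $G$. For the Poisson part, the identity $\Delta\Psi=\bw$ follows by differentiating under the integral and applying the mode-by-mode delta identities, while periodicity of $\Psi$ in $x_3$ is inherited from $G$. The decay $|\Psi(x)|\to 0$ as $|\tilde x|\to\infty$ is the nontrivial point and depends essentially on the hypothesis $\int_\Omega\bw=0$: writing
\[
\Psi(x)=-\frac{1}{4\pi^2}\int_\Omega\bigl[G(x-y)+\tfrac{1}{2\pi\kappa^2}\log|\tilde x|\bigr]\bw(y)\,dy
\]
(legitimate because the added constant-in-$y$ term integrates to zero against $\bw$), the modified integrand reduces to the Bessel sum plus $\frac{1}{2\pi\kappa^2}\log(|\tilde x|/|\tilde x-\tilde y|)$, both tending to zero as $|\tilde x|\to\infty$ uniformly in $\tilde y$ on compact subsets, so dominated convergence finishes the argument. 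The main obstacles are thus the careful bookkeeping of constants across the several layers of normalization, and this decay argument for $\Psi$, which breaks down without the zero-mean hypothesis.
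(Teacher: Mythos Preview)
Your approach is exactly the ``standard approach by means of Fourier analysis'' that the paper invokes; the paper in fact omits the proof entirely and refers to Folland, so your mode-by-mode reduction to the 2D logarithmic fundamental solution for $n=0$ and to $K_0(|n||\tilde x|/\kappa)$ for $n\neq 0$ is precisely what is intended. One small caution: your decay argument for $\Psi$ appeals to dominated convergence ``uniformly in $\tilde y$ on compact subsets,'' which tacitly assumes compact support of $\bw$ in $\tilde x$, whereas the stated hypothesis is only $\bw\in L^2_{per}(\Omega)$; in the paper the proposition is only ever applied to compactly supported smooth data, so this is harmless in context, but as written the tail of $\bw$ would need a separate estimate.
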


This result can be proved using a standard approach by means of
Fourier analysis. We omit the proof,  and we refer the reader to
\cite{Folland1}.

\begin{definition}\label{defbs} Let $G$ be the Green's function given by
\eqref{greenfunc}. Set
\begin{equation}\label{kernel-b-s}
\mathcal K = \mathcal K(x):=\dfrac{1}{4\pi^2}\nabla G(x), \mbox{ for all }
x\in (\mathbb R^2\setminus\{0\})\times
(-\pi\kappa,\pi\kappa).
\end{equation}
We refer to the function $\mathcal K$ as the Biot-Savart kernel.

\end{definition}

In the following result we obtain an estimate for the Biot-Savart kernel.

\begin{lemma}\label{lema4} The kernel $\mathcal K$, defined by
\eqref{kernel-b-s}, satisfies the following
estimate
\begin{equation} \label{needanumber}
|\mathcal K(x)|\leq C\left(\dfrac{1}{|x|^2}+\dfrac{1}{|\tilde x|}\right),
\mbox{ for all } x\in (\mathbb R^2\setminus\{0\})\times
(-\pi\kappa,\pi\kappa).
\end{equation}
\end{lemma}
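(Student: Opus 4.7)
I would begin by differentiating \eqref{greenfunc} term-by-term, which is justified on $\{|\tilde x|>0\}$ by the exponential decay $K_0(r)=O(r^{-1/2}e^{-r})$ as $r\to\infty$. Using $K_0'(r)=-K_1(r)$, the horizontal components become, for $i=1,2$,
\[\partial_i G(x)=-\frac{x_i}{2\pi\kappa^3|\tilde x|}\sum_{n=1}^{\infty}nK_1\!\left(\tfrac{n|\tilde x|}{\kappa}\right)\cos\!\left(\tfrac{nx_3}{\kappa}\right)-\frac{x_i}{2\pi\kappa^2|\tilde x|^2},\]
with an analogous expression for $\partial_3 G$ involving $K_0$ and $\sin(nx_3/\kappa)$. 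The explicit $-x_i/(2\pi\kappa^2|\tilde x|^2)$ coming from the logarithmic term has modulus at most $C/|\tilde x|$, accounting for the second summand on the right of \eqref{needanumber}.

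It remains to bound the Bessel series. I would split the summation at $N=\lceil\kappa/|\tilde x|\rceil$, so that the small-argument asymptotics $K_1(r)\le C/r$, $K_0(r)\le C(1+|\log r|)$ apply on the head $n\le N$ and the large-argument asymptotics $K_j(r)\le C r^{-1/2}e^{-r}$ apply on the tail $n>N$. The tail is immediately controlled by an exponentially convergent series of size $C\kappa/|\tilde x|$, already absorbed by \eqref{needanumber}. On the head, bounding $|nK_1(n|\tilde x|/\kappa)|$ termwise by $C\kappa/|\tilde x|$ yields a crude estimate of order $\kappa^2/|\tilde x|^2$, which matches $C/|x|^2$ only in the diagonal regime $|x_3|\lesssim|\tilde x|$, where $|x|\asymp|\tilde x|$.

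The main obstacle is the off-diagonal regime $|x_3|\gtrsim\kappa\gg|\tilde x|$: here $|x|\asymp\kappa$ is bounded, so \eqref{needanumber} demands a bounded $|\nabla G|$, whereas the crude bound $\kappa^2/|\tilde x|^2$ diverges. I would resolve this by summation by parts on the head, exploiting the Dirichlet-kernel bound $\left|\sum_{n\le M}e^{\pm inx_3/\kappa}\right|\le C|\sin(x_3/(2\kappa))|^{-1}$ (which is bounded uniformly once $|x_3|\gtrsim\kappa$) together with a total-variation estimate on $\{nK_1(n|\tilde x|/\kappa)\}_{n\le N}$ of order $\kappa/|\tilde x|$. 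This estimate follows from the Bessel-function identity $\frac{d}{dr}[rK_1(r)]=-rK_0(r)$ and the fact that $\int_0^1 sK_0(s)\,ds<\infty$. Conceptually, this cancellation reflects the fact that \eqref{greenfunc} is the periodization of the 3D Newtonian potential, so that $G(x)-\pi/|x|$ extends to a smooth function in a neighborhood of the origin in $\Omega$; extracting this cancellation between the Bessel series and the logarithmic correction is the delicate step of the argument.
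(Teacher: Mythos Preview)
Your approach is genuinely different from the paper's and the conceptual picture you sketch at the end is exactly right, but the concrete mechanism you propose --- a single Abel summation on the head $n\le N$ --- leaves a gap.

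Your dichotomy covers only two regimes: the ``diagonal'' case $|x_3|\lesssim|\tilde x|$ (crude head bound $\sim\kappa^2/|\tilde x|^2\sim 1/|x|^2$) and the ``far off-diagonal'' case $|x_3|\gtrsim\kappa$ (Dirichlet kernel uniformly bounded, Abel gives $\sim C/|\tilde x|$). The intermediate regime $|\tilde x|\ll|x_3|\ll\kappa$ is not handled. There the Dirichlet bound is only $|\sin(x_3/2\kappa)|^{-1}\sim\kappa/|x_3|$, so your Abel estimate yields at best
\[
|\mathcal K(x)|\;\lesssim\;\frac{1}{|\tilde x|\,|x_3|},
\]
which is \emph{not} controlled by $|x|^{-2}+|\tilde x|^{-1}$: take $|x_3|=\varepsilon$, $|\tilde x|=\varepsilon^{3/2}$ and let $\varepsilon\to0$; the left side is $\varepsilon^{-5/2}$ while the target is $\sim\varepsilon^{-2}$. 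A single summation by parts trades one power of $|\tilde x|$ for one power of $|x_3|$; to reach $|x|^{-2}\sim|x_3|^{-2}$ here you would need to iterate (using that the second differences of $nK_1(nr)$ have bounded $\ell^1$-norm on $n\le N$), and this second step is missing from your outline.

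The paper avoids this difficulty entirely by invoking the \emph{Schl\"omilch expansion}
\[
\sum_{n\ge1}K_0\!\Big(\frac{n|\tilde x|}{\kappa}\Big)\cos\!\Big(\frac{nx_3}{\kappa}\Big)
=\frac12\log\frac{\gamma|\tilde x|}{4\pi\kappa}+\frac{\pi\kappa}{2|x|}
+\frac{\pi\kappa}{2}\sum_{m\ge1}\Big[\frac{1}{\sqrt{|\tilde x|^2+(2\pi\kappa m\pm x_3)^2}}-\frac{1}{2\pi\kappa m}\Big],
\]
which is precisely the Poisson-summation/periodization identity you allude to in your last sentence. This makes the $\pi\kappa/(2|x|)$ term --- and hence the $1/|x|^2$ singularity of the gradient --- explicit from the outset; the remaining image sums over $m\ge1$ are then shown, by elementary comparison with integrals, to contribute only $O(1/|\tilde x|)$. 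In other words, the paper \emph{uses} the fact that $G-\pi\kappa/(2|x|)$ is regular near the origin rather than trying to recover it from partial-sum cancellation, and this is what your argument would also need to do to close the intermediate regime.
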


\begin{proof} Fix $x\in (\mathbb R^2\setminus\{0\})\times
(-\pi\kappa,\pi\kappa)$. First, we observe that
\[|\mathcal
K(x)|\leq\dfrac{1}{8\pi^3\kappa^2}\left|\nabla\left(\sum_{n=1}^{\infty}
K_0\left(\dfrac{|\tilde x|n}{\kappa}\right)
\cos\left(\dfrac{nx_3}{\kappa}\right)\right)\right|+\dfrac{1}{8\pi^3\kappa^2}
\dfrac{1}{|\tilde x|}.\]
In order to estimate the series involving the Bessel function we use the
following expansion in \textit{Schloeminch series} (see \cite{MOS}),
\begin{equation}\label{series}
\sum_{n=1}^{\infty} K_0\left(\dfrac{|\tilde
x|}{\kappa}n\right)\cos\left(\dfrac{x_3}{\kappa}n\right)=
\dfrac{1}{2} \left(\ln\left(\dfrac{\gamma }{4\pi\kappa}\right)+\ln|\tilde
x|\right)
+\dfrac{\pi\kappa}{2|x|} +\end{equation}
\[
+\dfrac{\pi\kappa}{2}\sum_{m=1}^{\infty}\left[\dfrac{1}{
\sqrt{|\tilde x|^2+(2\pi\kappa m-x_3)^2}}-\dfrac{1}{2\pi\kappa m}\right]+\]
\[+\dfrac{\pi\kappa}{2}\sum_{m=1}^{\infty}\left[\dfrac{1}{\sqrt{|\tilde
x|^2+(2\pi\kappa m+x_3)^2}}-\dfrac{1}{2\pi\kappa m}\right].
\]

To obtain the desired estimate \eqref{needanumber} for the kernel $\mathcal K$ we need to estimate
the gradient of the series in (\ref{series}). It is not hard to see that the $m$-th
term
in each of the two series on the right-hand-side of (\ref{series}) is bounded by
a multiple of
$|x|/m(2\pi\kappa m\pm x_3)$ and, hence, both series converge pointwise.
Furthermore, for each of these two series
we have that the derivative of the $m$-th term  is bounded by $1/(2\pi\kappa
(m-1/2))^2$, so that both  series
of derivatives are uniformly convergent. Thus, we can differentiate both series
in (\ref{series}) term by term and obtain that
\[\nabla\left(\sum_{n=1}^{\infty} K_0\left(\dfrac{|\tilde
x|}{\kappa}n\right)\cos\left(\dfrac{x_3}{\kappa}n\right)\right)\]
\[=\dfrac{1}{2|\tilde x|^2}(\tilde x,0)
-\dfrac{\pi\kappa}{2|x|^3}x+\dfrac{\pi\kappa}{2}\left(\sum_{m=1}^{\infty}
f_m^{-}(x)+\sum_{m=1}^{\infty}  f_m^{+}(x)\right),\]
where
\[f_m^{\pm}(x)=\dfrac{1}{(|\tilde x|^2+(2\pi\kappa m\pm
x_3)^2)^{3/2}}(-x_1,-x_2,\mp2\pi\kappa m- x_3)).\]

Next, let us estimate the series $\sum_{m=1}^\infty f_m^-$ and
$\sum_{m=1}^\infty f_m^+$. The idea  is to
compare the series with their
corresponding integrals. In order to do so, define
$h^{\pm}(t)=\dfrac{1}{(|\tilde x|^2+(2\pi\kappa t\pm x_3)^2)^{3/2}}$ and observe
that $h^{\pm}$ is decreasing for all $t\geq 1$ and $h^{\pm}(t)\geq h^{\pm}(1)$,
for all $0\leq t\leq 1$. Thus, we can
estimate each  series by its corresponding integral as follows:
\[\sum_{m=1}^{\infty}\dfrac{1}{(|\tilde x|^2+(2\pi\kappa m\pm x_3)^2)^{3/2}}\leq\]
\[\int_{0}^{\infty}\dfrac{1}{(|\tilde x|^2+
(2\pi\kappa t\pm x_3)^2)^{3/2}}dt=\dfrac{1}{2\pi\kappa|\tilde x|^2}
\dfrac{|x|\pm x_3}{|x|}\leq \dfrac{1}{\pi\kappa|\tilde x|^2}.\]

We obtain the following estimate for the series involving each of the first two
components of $f_m^\pm$:
\begin{equation}\label{est1}
\left|\sum_{m=1}^\infty(f_m^\pm(x))_i\right|\leq
|x_i|\sum_{m=1}^{\infty}\dfrac{1}{ (|\tilde x|^2+(2\pi\kappa m\pm
x_3)^2)^{3/2}}\leq\dfrac{|x_i|}{\pi\kappa|\tilde x|^2}, \quad \mbox{ for }
i=1,2.
\end{equation}

The last component of $f_m^\pm$ is more delicate since the terms
$(f^\pm_m)_3$ do not have the same sign and do not form a decreasing sequence
in $m$.
Set $g^{\pm}(t)=\dfrac{2\pi\kappa t\pm x_3}{(|\tilde
x|^2+(2\pi\kappa t\pm x_3)^2)^{3/2}}$ and observe that $g^\pm(t)\leq 0$ if
$t\leq \mp x_3/(2\pi\kappa)$ and $g^\pm(t)\geq 0$ if
$t\geq \mp x_3/(2\pi\kappa)$. Furthermore,
\[(g^{\pm})'(t)=\dfrac{2\pi\kappa(|\tilde x|^2-2(2\pi\kappa t\pm
x_3)^2)}{(|\tilde
x|^2+(2\pi\kappa t\pm x_3)^2)^{5/2}}.\]

Thus, $(g^{\pm})'(t)\leq 0$  if and  only if $t\geq \dfrac{1}{2\pi\kappa}
\left(\dfrac{|\tilde x|}{\sqrt{2}}\mp x_3\right)$. We will divide our
analysis in two cases.  First we assume that
$\dfrac{1}{2\pi\kappa}\left(\dfrac{|\tilde x|}{\sqrt{2}}\mp
x_3\right)\geq 1$  and we split the series as follows:
\[\sum_{m=1}^{\infty}\dfrac{2\pi\kappa m\pm x_3}{(|\tilde x|^2+(2\pi\kappa m\pm
x_3)^2)^{3/2}}= \]
\[ = \sum_{m=1}^{d}\dfrac{2\pi\kappa m\pm x_3}{(|\tilde x|^2+(2\pi\kappa m\pm
x_3)^2)^{3/2}}+
\sum_{m=d+1}^{\infty}\dfrac{2\pi\kappa m\pm x_3}{(|\tilde x|^2+(2\pi\kappa m\pm
x_3)^2)^{3/2}},\]
where $d=\left[\dfrac{1}{2\pi\kappa}\left(\dfrac{|\tilde x|}{\sqrt{2}}\mp
x_3\right)\right]$ (here, $[y]$ denotes the greatest integer less than y).
Therefore we have:
\[\sum_{n=1}^{\infty}\dfrac{2\pi\kappa n\pm x_3}{(|\tilde x|^2+(2\pi\kappa n\pm
x_3)^2)^{3/2}}\leq \]
\[ \leq d\dfrac{2\pi\kappa d\pm x_3}{(|\tilde x|^2+(2\pi\kappa d\pm x_3)^2)^{3/2}}+
\int_d^{\infty}\dfrac{2\pi\kappa t\pm x_3}{(|\tilde x|^2+(2\pi\kappa t\pm
x_3)^2)^{3/2}}dt =\]
\[=d\dfrac{2\pi\kappa d\pm x_3}{(|\tilde x|^2+(2\pi\kappa d\pm x_3)^2)^{3/2}}+
\dfrac{1}{2\pi\kappa}\dfrac{1}{\sqrt{|\tilde x|^2+(2\pi\kappa d\pm
x_3)^2}}\leq\]\[\leq
 \dfrac{1}{2\pi\kappa}\left(\dfrac{|\tilde x|}{\sqrt{2}}\mp
x_3\right)\dfrac{1}{\sqrt{2}|\tilde x|^2}+\dfrac{1}{2\pi\kappa}\dfrac{1}{|\tilde
x|}\leq \] \[\leq \dfrac{1}{2\pi\kappa}\left(\dfrac{2|\tilde
x|}{\sqrt{2}}\right)\dfrac{1}{\sqrt{2}|\tilde
x|^2}+\dfrac{1}{2\pi\kappa}\dfrac{1}{|\tilde x|}=
\dfrac{1}{\pi\kappa}\dfrac{1}{|\tilde x|}.\]

For the last inequality observe that, since
$\dfrac{1}{2\pi\kappa}\left(\dfrac{|\tilde x|}{\sqrt{2}}\mp x_3\right)\geq 1$
we get  $\dfrac{|\tilde x|}{\sqrt{2}}\mp x_3\leq
\dfrac{2|\tilde x|}{\sqrt{2}}$.

Hence,
\begin{equation}
\sum_{n=1}^{\infty}\dfrac{2\pi\kappa n\pm x_3}{(|\tilde
x|^2+(2\pi\kappa n\pm
x_3)^2)^{3/2}}\leq \dfrac{1}{\pi\kappa}\dfrac{1}{|\tilde x|}.
\end{equation}

Next assume that $\dfrac{1}{2\pi\kappa}\left(\dfrac{|\tilde x|}{\sqrt{2}}\mp
x_3\right)\leq 1$.  Then it is clear that
\[\sum_{m=1}^{\infty}\dfrac{2\pi\kappa m\pm x_3}{(|\tilde x|^2+
(2\pi\kappa m\pm x_3)^2)^{3/2}}\leq\]\[\leq\dfrac{2\pi\kappa \pm x_3}{(|\tilde x|^2+
(2\pi\kappa \pm x_3)^2)^{3/2}}+\int_1^{\infty}\dfrac{2\pi\kappa t\pm
x_3}{(|\tilde x|^2+(2\pi\kappa t\pm x_3)^2)^{3/2}}dt\]
\[=\dfrac{2\pi\kappa \pm x_3}{(|\tilde x|^2+(2\pi\kappa \pm
x_3)^2)^{3/2}}+\dfrac{1}{2\pi\kappa}\dfrac{1}{\sqrt{|\tilde x|^2+
(2\pi\kappa \pm x_3)^2}}\]
\[\leq\dfrac{1}{\sqrt{|\tilde x|^2+(2\pi\kappa \pm
x_3)^2}}\left(\dfrac{1}{2\pi\kappa \pm x_3}+
\dfrac{1}{2\pi\kappa}\right)\leq \dfrac{3}{2\pi\kappa|\tilde x|}. \]

Therefore, combining the last two estimates, we obtain that
\begin{equation}\label{est2}
\sum_{m=1}^\infty (f^{\pm}_m(x))_3=\sum_{n=1}^{\infty}
\dfrac{2\pi\kappa n\pm x_3}{(|\tilde
x|^2+(2\pi\kappa n\pm
x_3)^2)^{3/2}}\leq \dfrac{3}{2\pi\kappa|\tilde x|}.
\end{equation}

Finally, combining (\ref{est1}) and (\ref{est2}) we find
\[\left|\sum_{m=1}^{\infty}  f_m^{-}(x)+\sum_{m=1}^{\infty}
f_m^{+}(x)\right|\leq 2 (|x_1|+|x_2|)\dfrac{1}{\pi\kappa}
\dfrac{1}{|\tilde x|^2}+2\dfrac{3}{2\pi\kappa}\dfrac{1}{|\tilde
x|}\leq \dfrac{5}{\pi\kappa}\dfrac{1}{|\tilde x|}.\]

Consequently,
\[|\mathcal K(x)|\leq C\left(\dfrac{1}{|x|^2}+\dfrac{1}{|\tilde x|}\right).\]

\end{proof}

Next, we provide a decay property for a convolution-type operator
associated with the Biot-Savart kernel.

\begin{lemma}\label{lemaK}Let $\Phi \in \mathcal
C_{c,per}^{\infty}(\Omega;\mathbb R^3)$ be a  vector field such that
$\int_\Omega\Phi(x)dx=0$. Let $I=I(x)$ be given by
\[I(x):=\int_{\Omega}\mathcal K(x-y)\times \Phi(y)dy, \;\mbox{ for all } x\in
\Omega.\]
 Then, $|I(x)|=\mathcal
O(|\tilde x|^{-2})$ as $|\tilde x|\rightarrow
\infty$.
\end{lemma}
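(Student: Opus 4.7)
The plan is to use the hypothesis $\int_\Omega \Phi(y)\,dy=0$ to subtract a constant
(in $y$) piece of the kernel, gaining an extra power of $|\tilde x|^{-1}$, and then
to quantify this gain with a second-order far-field estimate on $\mathcal K$. For
$|\tilde x|$ large enough that $\mathcal K(x)$ is well-defined (extending $\mathcal K$
periodically in $x_3$ if necessary), I would first rewrite
\[
  I(x) \;=\; \int_\Omega \bigl[\mathcal K(x-y)-\mathcal K(x)\bigr]\times \Phi(y)\,dy,
\]
since $\mathcal K(x)\times\int_\Omega \Phi(y)\,dy=0$. Because $\Phi$ is smooth and
compactly supported in $\tilde y$, integration effectively takes place on a set
where $|y|\le M$ for some $M>0$, so for $|\tilde x|\ge 2M$ the whole segment
$\{x-sy:s\in[0,1]\}$ stays uniformly bounded away from the singular set
$\{\tilde x=0\}$.

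The heart of the proof is the estimate $|\nabla\mathcal K(x)|=O(|\tilde x|^{-2})$ as
$|\tilde x|\to\infty$. Since $\mathcal K=(4\pi^2)^{-1}\nabla G$ with $G$ given by
\eqref{greenfunc}, I would split $G$ into the $\log|\tilde x|$ part and the Bessel
series. The piece $-\tfrac{1}{2\pi\kappa^2}\log|\tilde x|$ contributes to $\nabla^2 G$
a term of order exactly $|\tilde x|^{-2}$. For the series, I would use the asymptotics
$K_0(r),K_1(r)\sim\sqrt{\pi/(2r)}\,e^{-r}$ as $r\to\infty$, together with the identity
$K_0'=-K_1$: each derivative in $\tilde x$ or $x_3$ produces at most a factor
$n/\kappa$ through the chain rule, so after two differentiations the $n$-th term
is bounded by $C\,n^{3/2}|\tilde x|^{-1/2}e^{-n|\tilde x|/\kappa}$, which is summable
in $n$ and exponentially small in $|\tilde x|$. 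Combining the two contributions gives
$|\nabla\mathcal K(x)|\le C|\tilde x|^{-2}$ for $|\tilde x|$ sufficiently large.

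To finish I would apply the mean value theorem on the segment from $x$ to $x-y$:
for $y$ in the support of $\Phi$ and $s\in[0,1]$, $|\tilde x-s\tilde y|\ge|\tilde x|/2$,
so
\[
  |\mathcal K(x-y)-\mathcal K(x)| \;\le\; |y|\sup_{s\in[0,1]}|\nabla\mathcal K(x-sy)|
  \;\le\; \frac{C\,|y|}{|\tilde x|^2}.
\]
Integrating against $|\Phi(y)|$ over the compact $\tilde y$-support of $\Phi$ yields
$|I(x)|\le C'|\tilde x|^{-2}$, giving the claim. The main technical obstacle is the
far-field estimate on $\nabla\mathcal K$: the Bessel series must be differentiated
twice and the extra factors of $n$ absorbed into the exponential decay of $K_0$ and
$K_1$, a more delicate companion to the $|\tilde x|^{-1}$ estimate already
established in Lemma \ref{lema4}.
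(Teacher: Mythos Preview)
Your argument is correct, but it differs from the paper's. The paper does \emph{not} estimate $\nabla\mathcal K$ and apply the mean value theorem. Instead, it writes $\mathcal K=\mathcal K_1-\mathcal K_2$, where $\mathcal K_1$ is the gradient of the Bessel series and $\mathcal K_2(x)=\frac{1}{8\pi^3\kappa^2}\,\frac{(\tilde x,0)}{|\tilde x|^2}$ is the gradient of the $\log$ term. For $\mathcal K_1$ the paper shows directly that $|\mathcal K_1(x)|\le C|\tilde x|^{-2}$, by comparing the series $\sum_n \tfrac{n}{\kappa}\bigl(K_1(\tfrac{|\tilde x|n}{\kappa})+K_0(\tfrac{|\tilde x|n}{\kappa})\bigr)$ with the integral $\int_0^\infty t(K_1+K_0)(|\tilde x|t/\kappa)\,dt$ and rescaling; the zero-integral hypothesis is not used here at all. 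For $\mathcal K_2$ the paper expands $|\tilde x-\tilde y|^{-2}=|\tilde x|^{-2}+O(|\tilde x|^{-3})$, and only at this point invokes $\int_\Omega\Phi=0$ to kill the leading $O(|\tilde x|^{-1})$ contribution.

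What each buys: your route is the clean textbook move (one subtraction, one gradient bound), and your treatment of the Bessel series via the large-argument asymptotics $K_\nu(r)\sim\sqrt{\pi/(2r)}\,e^{-r}$ is in fact sharper than necessary. The paper's route, on the other hand, isolates the decomposition $\mathcal K=\mathcal K_1-\mathcal K_2$ and the pointwise bound $|\mathcal K_1|\le C|\tilde x|^{-2}$ as standalone facts; both are reused heavily in the proof of Theorem~\ref{teores} (the estimates of $J_2^n$, $I_{22}^n$, $\mathcal F_n^1$, $\mathcal F_n^2$ all rely on this splitting). So while your proof of the lemma is self-contained and arguably simpler, the paper's proof is doing double duty by setting up machinery needed later.
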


\begin{proof}Let $R>0$ be such that $supp\; \Phi\subset B(0,R)\times \mathbb R$,
where $B(0,R)$ is the ball in $\mathbb R^2$ centered at the origin,  with
radius $R$. Consider any $x\in \Omega$ such that $|\tilde x|\geq 2R$.  Using
\eqref{kernel-b-s} and the fact that $K'_{0}=-K_1$ we obtain that
\[\mathcal
K(x)=\dfrac{1}{8\pi^3\kappa^2}\nabla\left(\sum_{n=1}^{\infty}K_0\left(\dfrac{
|\tilde x|n}{\kappa}\right)
\cos\left(\dfrac{nx_3}{\kappa}\right)\right)-\dfrac{1}{8\pi^3\kappa^2}\dfrac{1}{
|\tilde x|^2}(\tilde x,0)\]
\[=\dfrac{-1}{8\pi^3\kappa^2}\left(\sum_{n=1}^{\infty}\left(K_1\left(\dfrac{
|\tilde x|n}{\kappa}\right)
\cos\left(\dfrac{nx_3}{\kappa}\right)\dfrac{n\tilde x}{\kappa|\tilde
x|},K_0\left(\dfrac{|\tilde x|n}{\kappa}\right)
\sin\left(\dfrac{nx_3}{\kappa}\right)\dfrac{n}{\kappa}\right)\right)\]\[-\dfrac{1}{
8\pi^3\kappa^2}\dfrac{1}{|\tilde x|^2}(\tilde x,0)
=\mathcal K_1(x) -\mathcal K_2(x).\]

To estimate $\mathcal K_1$ we recall that Bessel function of the second kind
$K_{\nu}$ is positive and decreasing for all $\nu>0$, $\int_0^\infty
tK_0(t)dt=1$ and $\int_0^\infty tK_1(t)dt=\pi/2$. Thus,
\[|\mathcal K_1(x)|\leq
\dfrac{1}{8\pi^3\kappa^2}\sum_{n=1}^{\infty}\dfrac{n}{\kappa}
\left(K_1\left(\dfrac{|\tilde x|n}{\kappa}\right)+
K_0\left(\dfrac{|\tilde x|n}{\kappa}\right)\right) \]
\[\leq
\dfrac{1}{8\pi^3\kappa^3}\int_{0}^{\infty}t\left(K_1\left(\dfrac{|\tilde
x|t}{\kappa}\right)+
K_0\left(\dfrac{|\tilde x|t}{\kappa}\right)\right)dt=\]\[=
\dfrac{1}{8\pi^3\kappa^3}\int_{0}^{\infty}\dfrac{s\kappa}{|\tilde
x|}\left(K_1(s)+K_0(s)\right)\dfrac{\kappa}{|\tilde x|}ds\leq \dfrac{C}{|\tilde
x|^2}.\]
Thus,
\begin{eqnarray}\label{estk1}
\left|\int_{\Omega} \mathcal K_1(x-y)\times\Phi(y)dy\right|=\mathcal O(|\tilde
x|^{-2}) \mbox{ as }|\tilde x|\rightarrow \infty.
\end{eqnarray}

Next, we observe that, for $|\tilde x|\geq 2R$ and $|\tilde y|\leq R$, we
have that
\[|\tilde x -\tilde y|^{-2}=|\tilde x|^{-2}+\mathcal O(|\tilde x|^{-3}),\]
so that
\[\int_{\Omega}\mathcal K_2(x-y)\times
\Phi(y)dy=\dfrac{1}{8\pi^3\kappa^2}\int_{\Omega}\dfrac{1}{|\tilde x -\tilde
y|^2}
(\tilde x-\tilde y,0)\times \Phi(y)dy\]
\[=\dfrac{1}{8\pi^3\kappa^2}\dfrac{1}{|\tilde x|^2}(\tilde x,0)\times
\int_{\Omega}\Phi(y)dy+\mathcal O(|\tilde x|^{-2})=\mathcal O(|\tilde
x|^{-2}).\]

In view of the estimates obtained we find: $|I(x)|=\mathcal O(|\tilde x|^{-2})$
for $|\tilde x|\geq 2R$.
\end{proof}

\begin{cor}\label{corK}
Let $\Phi \in \mathcal
C_{c,per}^{\infty}(\Omega;\mathbb R^3)$ and assume that
$\int_{\Omega}\Phi(x)dx=0$. Then the function $I$ given by
\[I=I(x):=\int_{\Omega}\mathcal K(x-y)\times \Phi(y)dy\]
belongs to $L^r(\Omega;\mathbb R^3)$, for any $r>1$.
\end{cor}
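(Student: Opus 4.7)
The plan is to split $\Omega$ into the bounded region $\{|\tilde x|\le R\}\cap\Omega$ and its complement, and to control $\|I\|_{L^r}$ separately on each piece. Since $\Omega$ has finite width $2\pi\kappa$ in the $x_3$-direction, both estimates are essentially two-dimensional in nature; this is what forces the restriction $r>1$.

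For the exterior region I would apply Lemma~\ref{lemaK} directly: since $\Phi$ has compact support in $\tilde y$ and integrates to zero, the hypothesis is met and one obtains $R>0$, $C>0$ such that $|I(x)|\le C|\tilde x|^{-2}$ whenever $|\tilde x|\ge R$. Integrating in polar coordinates gives
\[
\int_{\{|\tilde x|\ge R\}\cap\Omega}|I(x)|^r\,dx \;\le\; 2\pi\kappa\, C^r\int_R^\infty \rho^{1-2r}\,d\rho,
\]
and the right-hand side is finite precisely when $2r-1>1$, i.e.\ $r>1$. This is exactly where the restriction on $r$ comes from. For the bounded region I would prove the stronger statement that $I\in L^\infty(\{|\tilde x|<R\}\cap\Omega)$, which then suffices because this set has finite Lebesgue measure. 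Using Lemma~\ref{lema4} together with the fact that $\Phi$ is bounded and compactly supported in $\tilde y$ (and is periodic in $y_3$, hence restricts to a bounded set in $\Omega$), one bounds
\[
|I(x)|\;\le\; C\,\|\Phi\|_{L^\infty}\int_{\mathrm{supp}\,\Phi\cap\Omega}\left(\frac{1}{|x-y|^2}+\frac{1}{|\tilde x-\tilde y|}\right)dy.
\]
A translation $z=x-y$ reduces the right-hand side to $\int_{S}(|z|^{-2}+|\tilde z|^{-1})\,dz$ over a fixed bounded set $S\subset\mathbb R^3$ (depending only on $R$ and $\mathrm{supp}\,\Phi$). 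The $|z|^{-2}$ singularity is integrable on bounded subsets of $\mathbb R^3$, while $|\tilde z|^{-1}$ is integrable on bounded subsets of $\mathbb R^2$; combined with the bounded $z_3$-range of $S$, this yields a uniform bound in $x$.

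Combining the two estimates yields $I\in L^r(\Omega)$ for every $r>1$. The main technical subtlety is the local bound: the $|\tilde z|^{-1}$ singularity of the Biot-Savart kernel runs along the entire $x_3$-axis rather than being concentrated at a point, but it remains only a two-dimensional singularity and is locally integrable once one projects onto the $\tilde z$-plane, while the boundedness of the $z_3$-range (ensured by the helical periodicity of $\Omega$) prevents any accumulation in the third coordinate.
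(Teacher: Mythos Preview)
Your argument is correct and follows essentially the same strategy as the paper: both split $\Omega$ into a bounded cylinder and its exterior, invoke Lemma~\ref{lemaK} for the $\mathcal O(|\tilde x|^{-2})$ decay on the exterior (giving $L^r$ exactly when $r>1$), and use local integrability of the kernel on the interior. The only cosmetic difference is that on the interior the paper quotes the generalized Young inequality $\|\mathcal K\ast\Phi\|_{L^r}\le\|\mathcal K\|_{L^s}\|\Phi\|_{L^p}$ with $\mathcal K\in L^s_{loc}$ for $s<3/2$, whereas you write out by hand the $r=\infty$ case using the pointwise bound of Lemma~\ref{lema4}; your phrasing that the translated integral is over ``a fixed bounded set $S$'' should technically read ``contained in a fixed bounded set $S$'', but the conclusion is unaffected.
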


\begin{proof}
Let $R>0$ be such that $supp\; \Phi\subset
B(0,R)\times \mathbb R$. It follows from the proof of Lemma \ref{lemaK} that
$|I(x)|=\mathcal O(|\tilde
x|^{-2})$ for $|\tilde x|\geq 2R$. Thus, for all $r>1$, $u\in L^r(\Omega\cap
(B(0,2R)^c\times \mathbb R))$. On the other hand, using that $\mathcal K\in
L_{loc}^s(\Omega)$, for $1\leq s< 3/2$, $\Phi\in L^p(\Omega)$,
for $1\leq p\leq \infty$, and the Generalized Young inequality on $U=\Omega
\cap (B(0,3R)\times
\mathbb R)$, we obtain that
\[\|\mathcal K\ast \Phi\|_{L^r(U)}\leq \|\mathcal
K\|_{L^s(U)}\|\Phi\|_{L^p(U)},\]
where $1/r=1/p+1/s-1$. We conclude by observing that
we can cover all
$r$ in the interval $(1,\infty]$ by choosing, for instance, $s=1$ and $p=r$.
\end{proof}

We are now ready to provide a characterization of the velocity field in terms
of  vorticity, i.e.,  the so called Biot-Savart law.

\begin{prop}\label{resolvebs} Let $\Phi \in \mathcal
C_{c,per}^{\infty}(\Omega;\mathbb R^3)$ be a vector field such that $\di
\Phi=0$
and $\displaystyle \int_{\Omega}\Phi dx=0$. Then there exists a unique smooth
solution of
\begin{eqnarray}\label{biotsavart}\left\{\begin{array}{ll}
\mbox{curl } u= \Phi\\
\mbox{div } u=0\\
|u(x)|\rightarrow 0 \mbox{ as } |\tilde x|\rightarrow \infty\\
u \mbox{ periodic in } x_3
\end{array}\right.\end{eqnarray}
and it is given by
\begin{equation}\label{ex-b-s}
u=u(x):=\int_{\Omega}\mathcal K(x-y)\times
\Phi(y)dy.
\end{equation}
\end{prop}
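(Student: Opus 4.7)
The plan is to build the solution via a vector potential and then verify each of the four conditions, closing with a standard Liouville-type uniqueness argument.

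Step one: construction. Applying Proposition \ref{propgreen} componentwise is legitimate because the hypothesis $\int_\Omega \Phi\,dx=0$ is inherited by each component; this gives a smooth, $x_3$-periodic vector field
\[
\Psi(x)=-\frac{1}{4\pi^{2}}\int_{\Omega}G(x-y)\,\Phi(y)\,dy
\]
satisfying $\Delta\Psi=\Phi$ and $|\Psi(x)|\to 0$ as $|\tilde x|\to\infty$. Differentiating under the integral sign (justified because $\Phi$ is smooth and compactly supported in $\tilde x$, and the log and Bessel singularities of $G$ are locally integrable) and using the identity $\nabla_x\times(G(x-y)\Phi(y))=\nabla_x G(x-y)\times\Phi(y)$ yields
\[
u:=-\curl \Psi=\int_{\Omega}\mathcal K(x-y)\times\Phi(y)\,dy,
\]
which is exactly formula \eqref{ex-b-s}.

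Step two: verification of \eqref{biotsavart}. The divergence condition is automatic, $\di u=-\di\curl\Psi=0$. For the curl, use $\curl\curl\Psi=\nabla(\di\Psi)-\Delta\Psi$, so that $\curl u=\Delta\Psi-\nabla(\di\Psi)$. Here I would show $\di\Psi\equiv 0$ by integrating by parts in $y$: since $\Phi$ is smooth, compactly supported in $\tilde y$, periodic in $y_3$, and $\di\Phi=0$,
\[
\di\Psi(x)=-\frac{1}{4\pi^{2}}\int_{\Omega}\nabla_{x}G(x-y)\cdot\Phi(y)\,dy
=\frac{1}{4\pi^{2}}\int_{\Omega}G(x-y)\,\di\Phi(y)\,dy=0,
\]
the boundary terms vanishing by the $x_3$-periodicity of both $G$ and $\Phi$ and by compact support in $\tilde y$. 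Hence $\curl u=\Delta\Psi=\Phi$. Periodicity of $u$ in $x_3$ follows from periodicity of $\mathcal K$. The decay $|u(x)|=\mathcal O(|\tilde x|^{-2})$ as $|\tilde x|\to\infty$ is exactly the content of Lemma \ref{lemaK}, whose hypothesis $\int_\Omega\Phi\,dx=0$ we have. Smoothness of $u$ follows by the same differentiation-under-the-integral argument used to derive \eqref{ex-b-s}.

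Step three: uniqueness. If $u_1,u_2$ both solve \eqref{biotsavart}, the difference $v=u_1-u_2$ is curl-free and divergence-free, hence harmonic componentwise; it is $2\pi\kappa$-periodic in $x_3$ and tends to zero as $|\tilde x|\to\infty$. Expanding $v$ in a Fourier series in $x_3$, each coefficient $\hat v_n(\tilde x)$ satisfies $(\Delta_{\tilde x}-n^{2}/\kappa^{2})\hat v_n=0$ on $\mathbb{R}^{2}$ with $\hat v_n(\tilde x)\to 0$ at infinity. For $n\neq 0$ a weak maximum principle for this positive elliptic operator forces $\hat v_n\equiv 0$, and for $n=0$ Liouville's theorem in $\mathbb{R}^2$ does the same; thus $v\equiv 0$.

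The main obstacle I anticipate is the rigorous handling of the differentiation under the integral sign and the $y$-integration by parts used to establish $\di\Psi=0$: these require controlling the mild singularity of $G$ at $\tilde x-\tilde y=0$ together with the $x_3$-periodic tails, and ensuring that all boundary contributions genuinely cancel. Once $\di\Psi=0$ is established cleanly, the remaining items are either immediate from the vector identities or are direct citations of Proposition \ref{propgreen} and Lemma \ref{lemaK}.
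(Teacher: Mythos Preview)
Your argument is correct and follows the same overall architecture as the paper: define the vector potential $\Psi$ via Proposition~\ref{propgreen}, set $u=-\curl\Psi$, invoke Lemma~\ref{lemaK} for decay, and finish with a Liouville-type uniqueness. The two places where you diverge from the paper are worth noting.

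First, to show that the unwanted term $\nabla\di\Psi$ vanishes, you integrate by parts in $y$ directly, using $\nabla_x G(x-y)=-\nabla_y G(x-y)$ and $\di\Phi=0$; this yields $\di\Psi\equiv 0$ outright. The paper instead argues at the level of $f:=\nabla\di\Psi$: it rewrites $f$ and $g:=\curl u$ as convolutions of $\mathcal K$ against $\di\Phi$ and $\curl\Phi$ respectively, uses Lemma~\ref{lemaK} to place both in $L^2(\Omega)$, and then takes the $L^2$ inner product of the identity $f=\Phi-g$ with $f$; since $f$ is a gradient while $\Phi$ and $g$ are divergence-free, both pairings vanish and $f\equiv 0$. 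Your route is more elementary but requires the singular-kernel integration by parts you flag as the ``main obstacle''; the paper's $L^2$ orthogonality argument sidesteps that singularity at the price of invoking Lemma~\ref{lemaK} once more.

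Second, for uniqueness you expand in Fourier modes in $x_3$ and apply a maximum principle (for $n\neq 0$) and two-dimensional Liouville (for $n=0$) on each coefficient. The paper simply observes that the difference, being harmonic, $2\pi\kappa$-periodic in $x_3$, and decaying in $\tilde x$, extends to a bounded harmonic function on all of $\mathbb{R}^3$, and invokes Liouville's theorem there directly. Both are valid; the paper's version is a shade quicker.
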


\begin{proof}
We start by proving that the vector field given by \eqref{ex-b-s} is indeed a
solution of \eqref{biotsavart}. Since $\Phi \in \mathcal
C_{c,per}^{\infty}(\Omega;\mathbb R^3)$, it is clear that $u\in \mathcal
C_{per}^{\infty}(\Omega;\mathbb R^3)$, i.e., $u$ is smooth and periodic in
$x_3$. Moreover, using Lemma \ref{lemaK}, we have that
$u(x)=O(|\tilde x|^{-2})$, and, hence, $|u(x)|\rightarrow 0$ as $|\tilde
x|\rightarrow \infty$. Next, recall that $\mK(x)=1/(4\pi^2)\nabla G(x)$, for all
$x\in \Omega$, $\tilde x\neq 0$, where $G$ is given by \eqref{greenfunc}. Since
$\Phi \in \mathcal
C_{c,per}^{\infty}(\Omega;\mathbb R^3)$ a straightforward calculation yields
\[u(x)=-\curl\left(-\frac{1}{4\pi^2}\int_{\Omega}G(x-y)\Phi(y)dy\right).\]
Hence, $\di u=0$. Define $\Psi(x)=-1/(4\pi^2)\int_{\Omega}G(x-y)\Phi(y)dy$, for
all $x\in \Omega$, and observe that, by Proposition \ref{propgreen}, $\Delta
\Psi=\Phi$. Thus, using that $-\curl \curl \Psi=\Delta \Psi-\nabla
\di \Psi$, we obtain
\begin{equation}\label{iden1}
\curl u=\Phi-\nabla \di \Psi.
\end{equation}
To conclude the proof that $u$ is a solution of
\eqref{biotsavart} it remains only to show that
$\nabla \di \Psi=0$. To see this, introduce the vector fields $f:=\nabla
\mbox{div } \Psi$ and $g:=\curl u$. Since $\Psi,\Phi\in \mathcal
C_{per}^{\infty}(\Omega;\mathbb R^3)$, we obtain the following identities
\[f(x)=-\int_{\Omega}\mathcal
K(x-y) \mbox{div
}\Phi(y)dy\] \[g(x)=\int_{\Omega}\mathcal K(x-y)\times \mbox{curl
}\Phi(y)dy,\]
so that, by Lemma \ref{lemaK}, we have that $f=\mathcal O(|\tilde x|^{-2})$
and
$g=\mathcal O(|\tilde x|^{-2})$. Therefore, $f$ and $g$ belong to $L^2(\Omega)$.
Taking the $L^2$-inner product between $f$ and identity (\ref{iden1}) we
obtain
that
\[(f,f)_2=(\Phi,f)_2-(g,f)_2.\]

We have that $f$ is the gradient of div $\Psi$, div $g=0$ and, by hypothesis,
div $\Phi=0$. Thus, we have that $(\Phi,f)_2=(g,f)_2=0$.
Therefore, $(f,f)_2=0$, which, in turn, implies that $\nabla
\mbox{div } \Psi\equiv 0$.

Finally, let us prove that the system \eqref{biotsavart} has a unique solution.
Suppose that we have two solutions of (\ref{biotsavart}),
namely $u_1$ and $u_2$. Of course,  $U:=u_1-u_2$ is a solution
of (\ref{biotsavart}), with $\Phi=0$, that is,
\begin{eqnarray}\left\{\begin{array}{ll}\label{uni}
\mbox{curl } U= 0\\
\mbox{div } U=0\\
|U(x)|\rightarrow 0 \mbox{ as } |\tilde x|\rightarrow \infty\\
U \mbox{ periodic in } x_3.
\end{array}\right.\end{eqnarray}
Using the identity $\curl \curl U= -\Delta U+\nabla
\di U$ and the fact that $U$ is a solution of \eqref{uni} it follows that
$\Delta U=0$. Observe that $U$ can be regarded as a harmonic function in
the full-space, which is $2\pi\kappa$-periodic with respect to the
third component, and which decays to zero as $|\tilde x|$ goes to
infinity, so that $U$ is bounded in $\mathbb R^3$.
It follows from Liouville's theorem (see e.g. \cite{evans})
that $U\equiv 0$.
\end{proof}


\subsection{Weak solution}\label{sec2.3}
We begin by giving the definition of a weak solution; this formulation is
inspired on Schochet's weak
vorticity formulation for the $2D$ incompressible Euler equations, see
\cite{schochet}.

\begin{definition}\label{w-s} Let
$\w^0\in L_{c,per}^p(\Omega;\mathbb R)$ for some
$p>4/3$. We say that a scalar function $\w=\w(t,x)$ is
a weak solution of \begin{eqnarray}\label{vort}\partial_t \w+(u\cdot
\nabla)\w=0\end{eqnarray}  in the time interval $(0,T)$ with initial data
$\w^0$, if
\begin{itemize}
 \item [(i)] $\w\in L^{\infty}(0,T; L_{per}^p(\Omega;\mathbb R))$,
 \item [(ii)] the associated velocity field \[u(t,x)=\int_{\Omega}\mathcal
K(x-y)\times
\frac{\xi(y)}{\kappa} \w(t,y)dy\] belongs to $L^{\infty}(0,T;
L_{per}^2(\Omega;\mathbb R^3))$,
 \item [(iii)] for all test functions $\psi \in \mathcal
C^{\infty}_{c,per}([0,T)\times\Omega;\mathbb R)$ the following identity holds
true:
\end{itemize}
\begin{equation}\label{eqtw}
\int_0^{T}\int_{\Omega} \psi_t(t,x) \w(t,x)dxdt+\int_0^{T}
\int_{\Omega}\int_{\Omega} \mathcal
H_{\psi}(t,x,y)\w(t,y)\w(t,x)dydxdt \end{equation}
\[
+\int_{\Omega} \psi(x,0)\w^0(x)dx=0,\]
where
\[\mathcal H_{\psi}(t,x,y)=\dfrac{1}{2\kappa}\mathcal K(x-y)\cdot
\left(\xi(y)\times(\nabla\psi(t,x)-\nabla\psi(t,y))-(\xi(x)-\xi(y))\times
\nabla \psi(t,y)\right).\]
\end{definition}

\begin{remark}
We note in passing that the integrals in Definition \ref{w-s} are convergent.
Indeed, using H\"older's inequality we have that
\[\quad \left|\int_0^{T}\int_{\Omega} \psi_t(t,x) \w(t,x)dxdt+ \int_{\Omega}
\psi(x,0)\w^0(x)dx\right| \]
\[\leq \|\psi_t\|_{L^{1}(L^q)}\|\w\|_{L^{\infty}(L^p)}+\|\psi(0,\cdot)\|_{L^q}
\|\w^0\|_{L^p}<\infty,\]
where $1/p+1/q=1$.
It remains to prove that the following integral is bounded:
\[\int_0^{T} \int_{\Omega}\int_{\Omega} \mathcal
H_{\psi}(t,x,y)\w(t,y)\w(t,x)dydxdt.\]
To see that this integral is finite we divide the domain $\Omega \times \Omega
\times (0,T)$ into
subdomains as follows. Fix $\delta > 0$ and $R>0$ and let
$R_1=\{(x,y) \in \Omega\times \Omega:  |x-y|< \delta\}$,
$R_2=\{(x,y) \in \Omega\times \Omega:  |x-y|\geq \delta, |x|< R\}$
and
$R_3=\{(x,y) \in \Omega\times \Omega:  |x-y|\geq \delta, |x|\geq R\}$.
Performing estimates in each region yields the desired result; the proof follows
the same steps as in the proof of Theorem \ref{teores}, so we omit the details
here.


\end{remark}

Our next proposition consists in basic properties relating  the velocity field
and the corresponding
vorticity.

\begin{prop}\label{propr}Let $\bw\in L_{c,per}^p(\Omega;\mathbb R^3)$ and let
$u(x)=(\mathcal K\ast \bw)(x)$, for all $x\in\Omega$. Then, $u$
is helical and has vanishing helical swirl if and only if
$\bw=(\partial_1u_2-\partial_2u_1)\xi/\kappa$
in $\mathcal D'(\Omega;\mathbb R^3)$.
\end{prop}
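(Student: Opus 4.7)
The statement is an $L^p$/distributional refinement of Lemma \ref{w_3} for the forward direction, combined with the covariance of the Biot--Savart kernel under the helical group $\{S_\theta\}$ for the converse. I would establish the two implications separately.

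For the forward direction, assume $u$ is helical with vanishing helical swirl. Lemma \ref{equiv} recasts helicality as the distributional identity $(\xi \cdot \nabla) u = (u_2,-u_1,0)$, and the vanishing swirl condition reads $\kappa u_3 = x_1 u_2 - x_2 u_1$ almost everywhere. These are exactly the two relations used in the proof of Lemma \ref{w_3}, and the derivation there is algebraic: differentiate the vanishing swirl identity to express $\partial_i u_3$ ($i=1,2$) in terms of transverse derivatives of $u_1,u_2$, use the helicality relation to replace $\partial_3 u_j$ ($j=1,2$) similarly, and substitute into $(\curl u)_1 = \partial_2 u_3 - \partial_3 u_2$ and $(\curl u)_2 = \partial_3 u_1 - \partial_1 u_3$ to get $(\curl u)_1 = (x_2/\kappa)\w$ and $(\curl u)_2 = -(x_1/\kappa)\w$. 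Combined with $\curl u = \bw$ distributionally (which holds for $u = \mK\ast\bw$ when $\di \bw = 0$, a condition that is self-consistent with $\bw = \w\xi/\kappa$ since $\di(\w\xi/\kappa) = (\xi\cdot\nabla)\w/\kappa$), this yields $\bw = \w\xi/\kappa$ in $\mathcal D'$. All manipulations are valid in $\mathcal D'$ since $u \in L^r_{\mathrm{loc}}$ by Lemma \ref{lema4} and we only use distributional differentiation and multiplication by polynomials.

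For the converse, assume $\bw = (\partial_1 u_2 - \partial_2 u_1)\xi/\kappa$. The key input is the covariance of the Biot--Savart kernel: rotational invariance of $G$ in $\tilde x$ together with $S_\theta x - S_\theta y = R_\theta(x-y)$ give, via the chain rule, $\mK(R_\theta z) = R_\theta \mK(z)$; an elementary check yields $\xi(S_\theta x) = R_\theta \xi(x)$. Substituting $y = S_\theta z$ in $u(S_\theta x) = \int \mK(S_\theta x - y)\times\bw(y)\,dy$ reduces the helicality of $u$ to the helicality of $\bw$, and since $\xi$ is itself helical, the latter reduces (via $\bw = \w\xi/\kappa$) to the $S_\theta$-invariance of $\w$. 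This invariance is extracted from the self-consistency of the hypothesis: $\di \bw$ must vanish (as $\bw = \curl u$ forces $\di \bw = 0$ after handling the correction term $\mK\ast\di\bw$ via the same covariance) and $\di(\w\xi/\kappa) = (\xi\cdot\nabla)\w/\kappa$, so $(\xi\cdot\nabla)\w = 0$; since $\xi$ is the infinitesimal generator of $S_\theta$, this is equivalent to $\w\circ S_\theta = \w$. Vanishing helical swirl is then obtained by direct computation:
\[\kappa\,u(x)\cdot \xi(x) = \int_\Omega \w(y)\,\mK(x-y)\cdot(\xi(y)\times \xi(x))\,dy,\]
which can be shown to vanish via integration by parts in $y$, using $(\xi(x)\cdot\nabla_x + \xi(y)\cdot\nabla_y)G(x-y) = 0$ (a consequence of $G(S_\theta x - S_\theta y) = G(x-y)$) together with $(\xi\cdot\nabla)\w = 0$.

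The main technical obstacle lies in the reverse direction, where extracting the $S_\theta$-invariance of $\w$ from the structural identity on $\bw$ requires carefully combining the distributional hypothesis with the divergence-free condition on $\bw$ and justifying integrations by parts despite the singularity of $\mK$ at $\tilde x = 0$. A cleaner alternative is to mollify $\w$ via an $S_\theta$-invariant mollifier (averaging a standard mollifier over the $\{S_\theta\}$-orbit), apply the smooth analogues from Lemma \ref{derivative} and Lemma \ref{w_3}, and pass to the $L^p$ limit.
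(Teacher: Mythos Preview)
Your forward direction is essentially the paper's: both use $\kappa u_3 = x_1 u_2 - x_2 u_1$ together with the distributional helicality identity of Lemma~\ref{equiv} to reduce $\langle\bw,\Psi\rangle = \langle u,\curl\Psi\rangle$ to $\langle\w\xi/\kappa,\Psi\rangle$.

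For the converse the two arguments diverge. The paper first proves \emph{vanishing swirl} by a Liouville argument: from $\bw=\w\xi/\kappa$ one computes $(\curl\bw)\cdot\xi=-2\w$ directly; combining with $\curl\bw=-\Delta u$ and the identity $\Delta(u\cdot\xi)=(\Delta u)\cdot\xi+2\sum_i\nabla u_i\cdot\nabla\xi_i$ yields $\Delta(u\cdot\xi)=2\w-2\w=0$, and then decay (Lemma~\ref{lemaK}) plus Liouville forces $u\cdot\xi\equiv 0$. Only \emph{afterwards} is helicality verified, by a direct distributional check of \eqref{form-equiv}, split into the third-component piece \eqref{exp=0} and the transverse piece \eqref{eq6.2}, both of which rely on the already-established $u\cdot\xi=0$.

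Your route via covariance of $\mK$ under $\{S_\theta\}$ together with $(\xi\cdot\nabla)\w=0$ is a legitimate and more geometric way to obtain helicality of $u$, and it is arguably cleaner than the paper's componentwise verification. However, your vanishing-swirl step has a genuine gap. The identity $(\xi(x)\cdot\nabla_x+\xi(y)\cdot\nabla_y)G(x-y)=0$ is correct, but the integration by parts you propose does not annihilate $\int_\Omega\w(y)\,\mK(x-y)\cdot(\xi(y)\times\xi(x))\,dy$. Writing $\xi(y)\times\xi(x)=(\kappa(\tilde x-\tilde y),\,y_1x_2-y_2x_1)$ and integrating by parts in $y$, one picks up the term coming from $\di_y(\xi(y)\times\xi(x))=-2\kappa$, namely $-2\kappa\int_\Omega G(x-y)\w(y)\,dy$, which does not vanish and which neither $(\xi\cdot\nabla)\w=0$ nor the kernel identity eliminates. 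This is precisely the obstruction the paper's argument absorbs: the residual is what makes $\Delta(u\cdot\xi)$ vanish rather than $u\cdot\xi$ itself, and a Liouville-type step is then needed to pass from harmonicity plus decay to $u\cdot\xi\equiv 0$. Your mollification alternative does not bypass this either, since Lemmas~\ref{derivative} and~\ref{w_3} encode the forward implication and the evolution, not the converse structural statement. In short: keep your covariance argument for helicality, but for vanishing swirl you need the paper's harmonic-function step (or an equivalent maximum-principle argument).
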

\begin{proof}
First observe that, since $\bw=$curl $u$ in $\mathcal D'$, for all $\Psi \in
\mathcal C^{\infty}_c(\Omega;\mathbb R^3)$ we have that
$\langle\bw,\Psi\rangle=  \langle \mbox{curl }u
,\Psi\rangle=\langle u ,\mbox{curl }\Psi\rangle$. Suppose that $u$ is helical
and has vanishing swirl, that is, $\eta=u\cdot \xi=0$, then
$u_3(x)=(-x_2u_1(x)+x_1u_2(x))/\kappa$. Using this fact and Lemma
\ref{equiv} we obtain the following
\[\langle
\bw,\Psi\rangle= \dfrac{1}{\kappa}\left \langle \left(\begin{array}{cc}\kappa
u_1\\\kappa u_2\\-x_2u_1+x_1u_2\end{array}\right),\curl
\Psi\right\rangle=\]\[=\dfrac{1}{\kappa}\left \langle
\curl \left(\begin{array}{cc}\kappa
u_1\\\kappa u_2\\-x_2u_1+x_1u_2\end{array}\right), \Psi\right\rangle\]
\[=\dfrac{1}{\kappa}\langle \w\xi,\Psi\rangle+\dfrac{1}{\kappa}\left \langle
 \left(\begin{array}{cc}-\xi\cdot \nabla u_2-u_1\\\xi\cdot \nabla
u_1-u_2\\0\end{array}\right), \Psi\right\rangle\]\[=\dfrac{1}{\kappa}\langle
\w\xi,\Psi\rangle+\dfrac{1}{\kappa}\left \langle
 \left(\begin{array}{cc}u_1\\u_2\\u_3\end{array}\right),
(\xi\cdot \nabla)\left(\begin{array}{cc}-\Psi_2\\\Psi_1\\0\end{array}
\right)\right\rangle-\dfrac{1}{\kappa}\left
\langle
 \left(\begin{array}{cc}u_1\\u_2\\u_3\end{array}\right),
\mathcal R\left(\begin{array}{cc}-\Psi_2\\\Psi_1\\0\end{array}
\right)\right\rangle=\]\[=\dfrac{1}{\kappa}\langle
\w\xi,\Psi\rangle.\]

Therefore, for all $\Psi \in \mathcal C^{\infty}_c(\Omega;\mathbb R^3)$ we have
that $\displaystyle\langle\bw,\Psi\rangle=
\left\langle\w\xi/\kappa,\Psi\right\rangle.$

Conversely,  suppose that  $\bw=\w\xi/\kappa$, where
$\w=\partial_1u_2-\partial_2u_1$. First
observe that $(\mbox{curl }\bw)\cdot\xi=-2\w$. Indeed, for all
$\varphi \in \mathcal C^\infty_c(\Omega;\mathbb R)$ we have that
\[\langle (\mbox{curl }\bw)\cdot\xi,\varphi\rangle=
\langle \mbox{curl }\bw,\xi\varphi\rangle= \langle \bw,\mbox{curl
}(\xi\varphi)\rangle=  \langle \bw,\varphi\mbox{ curl
}\xi+\nabla\varphi\times\xi\rangle\]
\[= \langle \bw,-2\varphi e_3+\nabla\varphi\times\xi\rangle=
\left\langle
\dfrac{1}{\kappa}\w\xi,-2\varphi e_3+\nabla\varphi\times\xi\right\rangle=\]\[=
\left\langle
\dfrac{1}{\kappa}\w,\xi\cdot(-2\varphi
e_3+\nabla\varphi\times\xi)\right\rangle= \langle
-2\w,\varphi\rangle.\]

Since $\bw=$ curl $u$ and div $u=0$ in $\mathcal D'$ we also have that
$\mbox{curl } \bw=-\Delta u \mbox{ in } \mathcal D'$.  Furthermore,
since $\Delta (u\cdot \xi)=\Delta u\cdot \xi +u\cdot \Delta \xi
+2\sum_{i=1}^3\nabla u_i\cdot\nabla\xi_i$   and
$\Delta \xi=0$ we obtain that
\[\Delta (u\cdot \xi)= -\curl \bw \cdot
\xi+2\sum_{i=1}^3\nabla u_i\cdot\nabla\xi_i=2\w-2\w=0\]
in $\mathcal D'$. Thus, using Lemma \ref{lemaK}, $u\cdot \xi$ solves the
following equation
in the sense of distributions:
\[\left\{\begin{array}{ll}
\Delta (u\cdot \xi)=0 \;\mbox{ in }\; \Omega\\
|u(x)\cdot\xi(x)|\rightarrow 0 \mbox{ as }|\tilde x|\rightarrow \infty
         \end{array}\right.\]
Therefore, $u\cdot \xi=0$, i.e., $u$  has vanishing helical swirl.

Finally we prove that $u$ is helical. To do so, we note that, in view of Lemma \ref{equiv}, it is enough to  establish that, for all
$\Psi \in \mathcal C^{\infty}_c(\Omega;\mathbb R^3)$,
\begin{eqnarray}\label{eq6.1}
\int_{\Omega} u(x)\cdot ((\xi(x)\cdot \nabla)\Psi(x))dx=\int_{\Omega}
u(x)\cdot (\mathcal
R\Psi(x)) dx.
\end{eqnarray}

To prove \eqref{eq6.1} we begin by showing that, for all $\Psi \in \mathcal C^{\infty}_c(\Omega;\mathbb
R^3)$,
\begin{eqnarray}\label{exp=0}
\int_{\Omega}u_3(x)(\xi(x)\cdot \nabla)\Psi_3(x)dx=0.
\end{eqnarray}

Indeed, since $\curl u=\bw=\w \xi/\kappa$, we have
\[\langle u, \curl((x_1,x_2,0)\Psi_3)\rangle = \langle \curl u,
(x_1,x_2,0)\Psi_3 \rangle = \langle \bw\cdot (x_1,x_2,0), \Psi_3 \rangle = 0.\]
As $u\cdot \xi=0$ a.e., we also have that
\[\langle u, \curl((x_1,x_2,0)\Psi_3)\rangle = \left \langle u,
\left(\begin{array}{cc}
-x_2\partial_3\Psi_3\\x_1\partial_3\Psi_3\\x_2\partial_1\Psi_3-x_1\partial_2
\Psi_3 \end{array}\right)\right\rangle \]
\[= \langle -x_2u_1+x_1u_2,
\partial_3\Psi_3 \rangle +\langle u_3,x_2\partial_1\Psi_3-x_1\partial_2
\Psi_3 \rangle \]
\[ = \langle \kappa u_3,\partial_3\Psi_3 \rangle +\langle
u_3,x_2\partial_1\Psi_3-x_1\partial_2 \Psi_3 \rangle = \langle u_3, (\xi\cdot
\nabla)\Psi_3\rangle.\]
Therefore, $\langle u_3, (\xi\cdot \nabla)\Psi_3\rangle=0$, which proves
(\ref{exp=0}).

To conclude the proof that $u$ is helical it remains to prove that the right-hand-side of \eqref{eq6.1}
also vanishes. To this end it is enough to show that
\begin{eqnarray}\label{eq6.2}
\langle u_1, (\xi\cdot \nabla \Psi_1)\rangle +\langle u_2, (\xi\cdot \nabla
\Psi_2)\rangle=\langle u_1,\Psi_2\rangle +\langle u_2,-\Psi_1\rangle.
\end{eqnarray}
Let $\Phi\in \mathcal C^{\infty}_c(\Omega;\mathbb R^3)$ and
use that $\bw=(\partial_1u_2-\partial_2u_1)\xi/\kappa$ in
$\mathcal D'$ to obtain
\begin{equation}\label{eq6.3}
 \langle \bw, \Phi\rangle=\frac{1}{\kappa}\langle
(\partial_1u_2-\partial_2u_1)\xi, \Phi\rangle=-\dfrac{1}{\kappa}\left(\langle
u_2, \xi\cdot \partial_1\Phi -\Phi_2\rangle-\langle
u_1, \xi\cdot \partial_2\Phi +\Phi_1\rangle\right) \end{equation}
\[
=-\frac{1}{\kappa}\left(-\langle u_1, \Phi_1\rangle-\langle u_2,
\Phi_2\rangle-\langle u_1,\xi\cdot \partial_2\Phi \rangle+\langle u_2,
\xi\cdot \partial_1\Phi\rangle \right).
\]

Next, we observe that since $\bw=\curl u$ and $u\cdot \xi=0$ in $\mathcal D'$,
we also get that
\begin{eqnarray}\label{eq6.4}
\langle \bw, \Phi\rangle=\langle u,
\curl \Phi\rangle=\frac{1}{\kappa}\langle (\kappa u_1, \kappa u_2,
-x_2u_1+x_1u_2), \curl \Phi\rangle.
\end{eqnarray}

Now, combining (\ref{eq6.3}) and (\ref{eq6.4})
we obtain that
\[\langle u_1, (\xi\cdot \nabla (-\Phi_2))\rangle +\langle u_2, (\xi\cdot
\nabla
\Phi_1)\rangle=\langle u_1,\Phi_1\rangle +\langle u_2,\Phi_2\rangle. \]
Choosing $\Phi=(\Psi_2,-\Psi_1, 0)$ in the last expression
yields (\ref{eq6.2}). Finally, combining (\ref{eq6.2}) and (\ref{exp=0}) we
obtain (\ref{eq6.1}).
\end{proof}

\subsection{Smooth solutions}\label{sec2.4}
Our next results consists in local well-posedness of the 3D Euler equations with
helical symmetry and vanishing helical swirl  in the full space for smooth  initial data.

Recall the notation used in \cite{Majda} where $V^k(\Omega)$ stands for the set of all vector-valued functions in the Sobolev space $H^k(\Omega)$ which
are divergence free.

\begin{theorem}\label{Yo}
For any initial velocity $u^0\in V^k(\Omega)$, with $k\geq 3$, there exists
$T_0=T_0(\|u^0\|_{H^k})>0$ such that, for any $T<T_0$ there exists
a unique solution $u\in\mathcal C([0,T];
V^k(\Omega))\cap \mathcal C^1([0,T];V^{k-1}(\Omega))$ of the 3D Euler
equations. Moreover,  if  $u^0$ is helical and has vanishing helical swirl, then
$u(\cdot,t)$ also is helical and has vanishing helical swirl, for any fixed time $t \in [0,T]$.
\end{theorem}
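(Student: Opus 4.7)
The plan is to split the proof into (i) local well-posedness of classical solutions in $V^k$ without any symmetry assumption, and (ii) propagation of the helical symmetry and of the vanishing-swirl condition, both obtained through uniqueness in (i). For (i) I would adapt the classical Kato--Majda argument to the strip $\Omega$ by viewing elements of $V^k(\Omega)$ as $2\pi\kappa$-periodic-in-$x_3$ fields on $\mathbb R^3$. Commuting $D^\alpha$ for $|\alpha|\le k$ through the momentum equation, applying Kato--Ponce commutator estimates, and exploiting the cancellation $\int (u\cdot\nabla)D^\alpha u\cdot D^\alpha u\,dx=0$ produces
$$\frac{d}{dt}\|u(t)\|_{H^k}^2\leq C_k\,\|\nabla u(t)\|_{L^\infty}\,\|u(t)\|_{H^k}^2;$$
combined with $H^k\hookrightarrow W^{1,\infty}$ for $k\ge 3$, this yields a Gronwall-type lifespan $T_0=T_0(\|u^0\|_{H^k})$. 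Existence follows from Galerkin truncation or Friedrichs mollification, uniqueness from an $L^2$-estimate for the difference of two solutions, and $H^k$-regularity ensures decay in $\tilde x$ at each fixed time.

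Propagation of helical symmetry uses the rigid-motion invariance of \eqref{E} together with the uniqueness from (i). Fix $\theta\in\mathbb R$ and set $v(t,x):=R_{-\theta}u(t,S_\theta x)$, $q(t,x):=p(t,S_\theta x)$. A direct chain-rule calculation using the orthogonality of $R_\theta$ shows $(v,q)$ is a classical solution of \eqref{E} of the same regularity on $[0,T]$. Since $u^0$ is helical, $v(0,\cdot)=R_{-\theta}u^0(S_\theta\cdot)=u^0$, and uniqueness forces $v\equiv u$, which is exactly $u(t,S_\theta x)=R_\theta u(t,x)$ for all $t\in[0,T]$.

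For vanishing helical swirl, set $\eta(t,x):=u(t,x)\cdot\xi(x)$. Since $(u\cdot\nabla)\xi=(u_2,-u_1,0)=\mathcal R u$ and $u\cdot\mathcal R u=u_1u_2-u_2u_1=0$, the momentum equation yields
$$\partial_t\eta+(u\cdot\nabla)\eta=-\xi\cdot\nabla p.$$
To see $\xi\cdot\nabla p=0$, note that $-\Delta p=\partial_i u_j\partial_j u_i$ and, when $u$ is helical, $\nabla u(S_\theta x)=R_\theta\nabla u(x)R_\theta^T$, so cyclicity of the trace gives $(\partial_i u_j\partial_j u_i)\circ S_\theta=\partial_i u_j\partial_j u_i$. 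Uniqueness of the Poisson problem in the $x_3$-periodic class with decay in $\tilde x$ then implies $p\circ S_\theta=p$; differentiating at $\theta=0$ and using $\partial_\theta|_{\theta=0}S_\theta x=\xi(x)$ yields $\xi\cdot\nabla p=0$. Thus $\eta$ satisfies a linear transport equation along the smooth divergence-free field $u$, and $\eta(0,\cdot)\equiv 0$ propagates to $\eta(t,\cdot)\equiv 0$ on $[0,T]$.

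The principal subtlety in my view is justifying the Poisson-uniqueness step in the mixed periodic/decaying setting used to conclude $p\circ S_\theta=p$. This can be bypassed by applying $\xi\cdot\nabla$ directly to the momentum equation and using Lemma \ref{derivative}, which expresses helicity infinitesimally as $(\xi\cdot\nabla)u=\mathcal R u$: a short manipulation then gives $\xi\cdot\nabla p=0$ directly from the already-established helical invariance of $u$, avoiding any Green's-function representation of $p$.
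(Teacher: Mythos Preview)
Your proposal is correct and aligns with what the paper indicates: the paper omits the proof entirely, stating only that the first assertion ``can be proved in the same fashion as the classical result in $\mathbb R^3$'' and that the second ``can be proved using Dutrifoy's argument.'' Your sketch supplies exactly these ingredients --- the Kato--Majda energy method for local $V^k$ well-posedness, propagation of helical symmetry via rigid-motion invariance of \eqref{E} plus uniqueness, and propagation of $\eta\equiv 0$ by deriving its transport equation --- so there is no substantive difference in approach. One minor point: your argument for $\xi\cdot\nabla p=0$ actually shows only that $\xi\cdot\nabla p$ is a constant $c(t)$ in $x$ (from $p\circ S_\theta-p$ being independent of $x$); you then need the decay of $\nabla p$ in $\tilde x$ inherited from $u\in V^k(\Omega)$ to conclude $c(t)=0$, which is immediate but worth stating.
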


The first part of the assertion of the previous theorem can be proved in the
same fashion as the classical result in $\mathbb R^3$. The second part can be
prove using  Dutrifoy's argument (see \cite{Dutrifoy}). We omit the proof.

Next,  we will construct a sequence of smooth solutions, using Theorem \ref{Yo} above, which
is uniformly bounded in a suitable function space.

\begin{theorem}\label{app-seq} Let $\w^0 \in L^p_{c,per}(\Omega;\mathbb  R)$,
for some $1< p\leq \infty$. For each $n\in \mathbb N$, define
$\w^0_n:=\rho_n\ast \w^0$, where $\rho_n$ is the standard mollifier in $\mathbb
R^3$ which is periodic with respect to the third component. Then,
\begin{itemize}
 \item [(i)] for each  $n\in \mathbb N$, there exists a smooth solution $\w_n\in
L^{\infty}(0,\infty; L^p_{c,per}(\Omega;\mathbb R))$
of (\ref{vort}) with initial data $\w^0_n$;
 \item [(ii)]$\{\w_n\}_n$ is  uniformly bounded in $L^{\infty}(0,\infty;$ $
L_{per}^q(\Omega;\mathbb R))$, for all $1\leq q\leq p$;
 \item [(iii)]$\{\w_n\}_n$ possesses a subsequence which converges
weak-$\ast$ in $L^{\infty}(0,\infty; L_{per}^q(\Omega; \mathbb R))$ to a limit
$\w$,  for all $1<q\leq p$;
 \item [(iv)] $\|\w(t,\cdot)\|_{L^1(\Omega)}\leq C$, for some constant
$C>0$.
\end{itemize}
\end{theorem}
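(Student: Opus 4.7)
For (i), I would first check that the mollification $\omega_n^0 := \rho_n * \omega^0$ preserves the scalar helical invariance of $\omega^0$: the radially symmetric mollifier $\rho_n$ commutes with both rotations about the $x_3$-axis and translations in $x_3$, hence with the helical action $S_\theta$. Since helical invariance of a scalar is equivalent to $\xi\cdot\nabla \omega_n^0 = 0$, and $\mathrm{div}\,\xi = 0$, the vector field $\bw_n^0 := \omega_n^0 \xi/\kappa$ is divergence-free; assuming $\int_\Omega \bw_n^0 \, dx = 0$ (the setting of the main theorems), Proposition \ref{resolvebs} produces a smooth $u_n^0 \in V^k(\Omega)$, which by the converse direction of Proposition \ref{propr} is helical with vanishing helical swirl. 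Theorem \ref{Yo} provides a local smooth solution, which extends globally in time by the no-swirl helical reduction of Dutrifoy \cite{Dutrifoy}. The associated scalar vorticity $\omega_n = \partial_1 (u_n)_2 - \partial_2 (u_n)_1$ solves the transport equation \eqref{third} and remains compactly supported in $\tilde{x}$ for each finite time.

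For (ii), because $u_n$ is smooth and divergence-free, the associated flow map $\phi^n_t$ is measure-preserving and $\omega_n(t,x) = \omega_n^0((\phi^n_t)^{-1}(x))$; hence $\|\omega_n(t,\cdot)\|_{L^q(\Omega)} = \|\omega_n^0\|_{L^q(\Omega)} \le \|\omega^0\|_{L^q(\Omega)}$ for every $1 \le q \le \infty$ by Young's inequality, and the right-hand side is finite for $1 \le q \le p$ since compact support in $\tilde{x}$ gives $L^p_{c,per} \hookrightarrow L^q_{per}$ on the cell $\Omega$. For (iii), note that for $1<q<\infty$ the space $L^\infty(0,\infty; L^q_{per}(\Omega))$ is the dual of the separable space $L^1(0,\infty; L^{q'}_{per}(\Omega))$, so Banach-Alaoglu yields weak-$*$ sequential compactness. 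A diagonal extraction across a countable dense set of exponents in $(1,p]$, combined with the uniqueness of distributional limits, produces one subsequence with a common limit $\omega \in \bigcap_{1 < q \le p} L^\infty(0,\infty; L^q_{per})$.

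The delicate step is (iv), because $L^1$ is non-reflexive and a uniform $L^1$ bound does not automatically transfer to a limit under weak-$*$ convergence in the $L^q$ scale for $q>1$. I would proceed as follows: using (ii) with $q=1$, embed $L^1$ in the space $\mathcal{M}$ of finite Radon measures and extract a further subsequence converging weak-$*$ in $L^\infty(0,\infty; \mathcal{M}(\Omega))$ to some measure $\mu$. Uniqueness of distributional limits identifies $\mu$ with $\omega$, and since by (iii) $\omega(t,\cdot)\in L^q_{per}$ with $q>1$ at a.e.\ $t$, $\mu(t,\cdot)$ is absolutely continuous with Lebesgue density $\omega(t,\cdot)$. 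Weak-$*$ lower semicontinuity of the total variation norm then gives
\[
\|\omega(t,\cdot)\|_{L^1(\Omega)} = |\mu(t,\cdot)|(\Omega) \le \liminf_{n\to\infty} \|\omega_n(t,\cdot)\|_{L^1} \le \|\omega^0\|_{L^1},
\]
for a.e.\ $t$, which is the required bound. The higher integrability supplied by (iii) is what prevents the limit from being a singular measure and is the key input to this last step.
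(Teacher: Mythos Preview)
Your handling of parts (ii)--(iv) matches the paper's argument essentially verbatim: conservation of $L^q$ norms along the measure-preserving flow, Banach--Alaoglu for the weak-$\ast$ subsequence, and weak-$\ast$ lower semicontinuity of the total variation to recover the $L^1$ bound on the limit.

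The genuine gap is in part (i), in the step where you pass from local to global existence. You write that the local solution from Theorem~\ref{Yo} ``extends globally in time by the no-swirl helical reduction of Dutrifoy,'' and then assert that $\omega_n$ ``remains compactly supported in $\tilde x$ for each finite time.'' But Dutrifoy's global result is proved in a \emph{bounded} helical domain, and the obstruction in full space is precisely the one you have skipped over: the full vorticity vector is $\bw_n = \omega_n\,\xi/\kappa$ with $\xi(x)=(x_2,-x_1,\kappa)$ growing linearly in $|\tilde x|$. Conservation of $\|\omega_n(t,\cdot)\|_{L^\infty}$ along the scalar transport equation therefore does \emph{not} yield a bound on $\|\bw_n(t,\cdot)\|_{L^\infty}$, which is what the Beale--Kato--Majda continuation criterion requires. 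Equivalently, your assertion of compact support in $\tilde x$ is exactly the missing lemma, not a consequence of anything you have cited.

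The paper closes this gap by a direct estimate on the support radius $R_n(t):=\sup_{\alpha\in\mathrm{supp}\,\omega_n^0}|X_n(t,\alpha)|$. Using the pointwise bound on the Biot--Savart kernel from Lemma~\ref{lema4}, one bounds the speed of particle trajectories by splitting the convolution into near and far contributions and using $|\xi(y)|\le |\tilde y-\tilde X_n|+|\xi(X_n)|$; this produces a differential inequality of the form $\frac{d}{dt}|X_n(t,\alpha)|\le C(n)(R_n(t)+1)$, hence $R_n(t)\le (C+C(n)t)e^{C(n)t}$ by Gronwall. Only then does one get $\|\bw_n(t,\cdot)\|_{L^\infty}\le C(n)(1+t)e^{C(n)t}$, which is locally integrable in $t$, and Beale--Kato--Majda gives global continuation for each fixed $n$. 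This support-control step is the substantive new ingredient in the full-space setting, and your plan needs to supply it.
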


\begin{proof}
For each $n\in \mathbb N$ define $u^0_n(x)=\int_{\Omega} \mathcal K(x-y)\times
(1/\kappa)\xi(y)\w^0_n(y)dy$. Then
$u^0_n\in V^k$ (see Corollary \ref{corK}) for all $k\in\mathbb N$, so we can
use Theorem \ref{Yo} to obtain, for each $n$, a time $T^n>0$
 such that, for any $T<T^n$ there exists a unique solution $u_n\in\mathcal
C^1([0,T];V^{k-1})$ of the Euler equations. Moreover, for each $n$,
$u_n$ is helical and has vanishing helical swirl. Thus, for each $n$, $\bw_n=$
curl $u_n$ solves (\ref{eqlem}) and, therefore $\w_n$ solves (\ref{third}), whence
$\w_n(t,X_n(\alpha,t))=\w^0_n(\alpha)$, where $X_n(\alpha,t)$ is
the particle trajectory, starting at $\alpha \in \Omega$, such that
\[\left\{\begin{array}{ll}
\dfrac{dX_n(t,\alpha)}{dt}=u_n(t,X_n(t,\alpha))\\
X_n(\alpha,t)\left|_{t=0}\right.=\alpha.
          \end{array}\right.\]

It follows that
\[\int_{\Omega}|\w_n(t,x)|^pdx=\int_{\Omega}|\w_n^0(\alpha)|^pd\alpha,
\mbox{ for all } 1\leq p<\infty,\;\mbox{ and
}\;\|\w_n(t,\cdot)\|_{L^\infty(\Omega)}=
\|\w^0_n\|_{L^\infty(\Omega)}.\]

Note that, since $\w_0\in L^p_{c,per}(\Omega)$, we have $\w_0\in
L_{per}^q(\Omega)$ for all $1\leq q\leq p$. Therefore,
\[\|\w_n(t,\cdot)\|_{L^q(\Omega)}=\|\w_n^0\|_{L^q(\Omega)}\leq
\|\w^0\|_{L^q(\Omega)}\leq C, \mbox{ for all } 1\leq q\leq p,\]
which implies that  $\{\w_n\}_n$ is  uniformly bounded in
$L^{\infty}(0,\infty; L_{per}^q(\Omega;\mathbb R))$, for all $ 1\leq q\leq p$.

Observe also that $\|\w^0_n\|_{L^\infty(\Omega)}\leq
C(n)\|\w^{0}\|_{L^p(\Omega)}$, hence $\|\w_n(t,\cdot)\|_{L^\infty(\Omega)}\leq
C(n)$. Now, since  $\bw_n(t,x)=\w_n(t,x)\xi(x)/\kappa$ we obtain that
\[\|\bw_n(t,\cdot)\|_{L^\infty}=\frac{1}{\kappa}\sup_{\alpha \in \;supp\;
w_n^0}| \w_n^0(\alpha)\xi(X_n(t,\alpha))|\leq C\|\w_n^0\|_{L^\infty}
\left(\kappa+\sup_{\alpha \in\; supp \;\w_n^0}|X_n(t,\alpha)|\right).\]

It remains to estimate $\sup_{\alpha \in\; supp \;\w_n^0}|X_n(t,\alpha)|$. To do
so, define $\displaystyle R_n(t)= \sup_{ \alpha \in K_n}|X_n(t,\alpha)|$ and
$K_n=supp \;\w^0_n$. Observe that for all $\alpha'\in K_n$ we have
\[\dfrac{d}{dt}|X_n(t,\alpha')|\leq \sup_{ \alpha
\in K_n}\dfrac{d}{dt}|X_n(t,\alpha)|\leq \sup_{ \alpha
\in K_n}\left|\dfrac{d}{dt}X_n(t,\alpha)\right| \]
\[\leq \sup_{\alpha \in K_n}\int_{|X_n(t,\alpha)-y|\leq 2R_n(t)}
|\mathcal K(X_n(t,\alpha)-y)||\xi(y)||\w_n(y)|dy \]
\[\leq \sup_{\alpha\in K_n}\int_{|X_n(t,\alpha)-y|\leq
4\kappa\pi}\left(\dfrac{1}{|X_n(t,\alpha)-y)|^2}+\dfrac{1}{|\tilde
X_n(t,\alpha)-\tilde y)|}\right)(|\tilde y-\tilde
X_n(t,\alpha)|+|\xi(X_n(t,\alpha))|)|\w_n(y)|dy\]
\[+C \sup_{\alpha \in K_n}\int_{4\kappa\pi\leq|X_n(t,\alpha)-y|\leq
2R_n(t)}|\mathcal \xi(y)||\w_n(y)|dy \]
\[\leq\|\w_n\|_{L^\infty} \sup_{\alpha \in K_n}\int_{|X_n(t,\alpha)-y|\leq
4\kappa\pi}\left(\dfrac{1}{|X_n(t,\alpha)-y)|}+1+\dfrac{\xi(X_n(t,\alpha))}{|
X_n(t,\alpha)-y)|^2}+\dfrac{\xi(X_n(t,\alpha))}{|\tilde X_n(t,\alpha)-\tilde
y)|}\right)dy \]
\[+CR_n(t)\|\w_n\|_{L^1}\leq C(\|\w_n\|_{L^\infty}+\|\w_n\|_{L^1})(R_n(t)+1)\leq
C(n)(R_n(t)+1),\]
where we used Lemma \ref{lema4} to estimate the kernel $\mathcal K$.

Hence,
\[\dfrac{d}{dt}|X_n(t,\alpha')|\leq C(n)(R_n(t)+1),\]
which implies, $\displaystyle|X_n(t,\alpha')|\leq
|X_n(0,\alpha')|+C(n)\int_{0}^{t}(R_n(s)+1)ds$.

Taking the supremum  in $\alpha' \in K_n$ of the last identity
we obtain
\[R_n(t)\leq C+C(n)\int_{0}^{t}(R_n(s)+1)ds.\]

By Gronwall's lemma we have that $R_n(t)\leq (C+C(n)t)e^{C(n)t}$. Thus,
$\|\bw_n(t,\cdot)\|_{L^\infty}\leq (C+C(n)t)\|\w^n_0\|_{L^\infty}e^{C(n)t}$.

We have, hence,
\[\int_{0}^t\|\bw_n(s,\cdot)\|_{L^\infty}ds\leq
\|\w^n_0\|_{L^\infty}\int_{0}^t(C+C(n)s)e^{C(n)s}ds\leq
\|\w^n_0\|_{L^\infty}(Ct+C(n)t^2)e^{C(n)t}.\]

Therefore, by the Beale-Kato-Majda theorem we conclude that, for each $n\in
\mathbb N$, $u_n$ can be continued, within its class, for all $T>0$. Therefore,
$\w_n$ is a global smooth solution of (\ref{vort}), for each $n\in\mathbb N$.

Since $\{\w_n\}$ is uniformly bounded in $L^{\infty}(0,\infty;
L^q(\Omega))$, for all $1\leq q\leq p$, then there exists a subsequence, that
we still denote by $\{\w_n\}$, that converges
weak-star in $L^{\infty}(0,\infty; L^q(\Omega))$ to a limit $\w$, for all
$1<q\leq p$.
Since the total variation of a measure is weak-star lower semicontinuous  and as $\|\w_n(t,\cdot)\|_{L^1(\Omega)}\leq C$,
we find that $\|\w(t,\cdot)\|_{L^1(\Omega)}\leq C$.
\end{proof}


\section{First existence theorem: balanced vorticity}\label{sec3}
In this section we prove existence of a weak solution to the helical
vorticity equation for initial data with vanishing integral. This condition is
needed since we are going to make use of the  Biot-Savart law \eqref{ex-b-s},
which requires this hypothesis.
\begin{theorem}\label{teores}
Let $\w_0\in L_{c,per}^p(\Omega;\mathbb R)$, for some $p>4/3$, such
that $\int_{\Omega}\w_0(x)dx=0$. Then, there exists a weak solution $\w=\w(t,x)$
of (\ref{vort}), in the sense of Definition \ref{w-s}, with initial data $\w_0$.
\end{theorem}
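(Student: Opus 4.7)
The plan is to construct a weak solution as the weak-$\ast$ limit of the smooth approximating sequence given by Theorem \ref{app-seq}, and to pass to the limit in the symmetric weak formulation (\ref{eqtw}). Let $\omega_n^0 = \rho_n \ast \omega_0$; since $\int_\Omega\omega_0 = 0$ and mollification preserves integrals, $\int_\Omega\omega_n^0 = 0$, so Proposition \ref{resolvebs} applies and yields a smooth helical velocity $u_n = \mathcal{K}\ast(\xi\omega_n/\kappa)$. Theorem \ref{app-seq} then produces global smooth solutions $\omega_n$ of (\ref{third}) with $\omega_n \rightharpoonup^\ast \omega$ in $L^\infty(0,\infty; L^q_{per}(\Omega))$ for all $1 < q \leq p$, together with a uniform $L^1$ bound on $\omega_n(t,\cdot)$. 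To show each smooth $\omega_n$ satisfies (\ref{eqtw}) with initial data $\omega_n^0$, multiply (\ref{third}) by a test function $\psi$, integrate by parts in $t$ and use $\di u_n = 0$, obtaining $\int(\psi_t + u_n\cdot\nabla\psi)\omega_n\,dx\,dt + \int\psi(\cdot,0)\omega_n^0\,dx = 0$. Substituting the Biot-Savart representation for $u_n$ and symmetrizing the resulting double integral in $(x,y)$ via $\mathcal{K}(x-y) = -\mathcal{K}(y-x)$ produces exactly the kernel $\mathcal{H}_\psi$ of Definition \ref{w-s}.

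The linear terms pass to the limit immediately: $\int\psi_t\omega_n \to \int\psi_t\omega$ by weak-$\ast$ convergence (with $\psi_t \in L^1(L^{q'})$), and $\int\psi(\cdot,0)\omega_n^0 \to \int\psi(\cdot,0)\omega_0$ by strong $L^p$ convergence of the mollification. The bilinear term is the main difficulty. Exploiting the cancellation built into $\mathcal{H}_\psi$, namely $|\nabla\psi(t,x) - \nabla\psi(t,y)| \lesssim |x-y|$ and $|\xi(x) - \xi(y)| = |\tilde x - \tilde y|$, combined with the estimate $|\mathcal{K}(x-y)| \lesssim |x-y|^{-2} + |\tilde x - \tilde y|^{-1}$ from Lemma \ref{lema4}, one obtains
\[
|\mathcal{H}_\psi(t,x,y)| \leq C_\psi\bigl(|x-y|^{-1} + 1\bigr)
\]
on the support of $\psi$, while for $x$ far from this support a direct adaptation of the proof of Lemma \ref{lemaK} yields $|\mathcal{H}_\psi(t,x,y)| = O(|\tilde x|^{-2})$ as $|\tilde x|\to\infty$.

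Following the splitting indicated in the preceding remark, fix $\delta, R > 0$ and decompose $\Omega \times \Omega$ into $R_1 = \{|x-y| < \delta\}$, $R_2 = \{|x-y| \geq \delta,\ |x| < R\}$ and $R_3 = \{|x-y| \geq \delta,\ |x| \geq R\}$. On $R_1$, a double application of H\"older's inequality to the singular part of $\mathcal{H}_\psi$ against $\omega_n\otimes\omega_n \in L^p\otimes L^p$ gives a bound $C\delta^\alpha$ with $\alpha > 0$, uniformly in $n$; this is precisely where the restriction $p > 4/3$ enters sharply. On $R_3$, the decay $|\mathcal{H}_\psi| = O(|\tilde x|^{-2})$ combined with the uniform $L^1$ bound from Theorem \ref{app-seq}(iv) makes the contribution tend to zero as $R \to \infty$, uniformly in $n$. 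On the bulk region $R_2$, $\mathcal{H}_\psi$ is bounded and continuous, but weak-$\ast$ convergence of $\omega_n$ does not by itself yield convergence of the product $\omega_n(x)\omega_n(y)$; here one derives from the vorticity equation a uniform bound on $\partial_t\omega_n$ in a negative Sobolev norm and applies an Aubin-Lions-type argument to extract a subsequence with $u_n \to u$ strongly in $L^2_{\mathrm{loc}}$, which is enough to identify the limit of the bilinear expression restricted to $R_2$. Sending first $n\to\infty$, then $\delta\to 0$ and $R\to\infty$, one recovers (\ref{eqtw}) for $\omega$.

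The principal obstacle is thus the passage to the limit in the bilinear integral. Two independent ingredients must be combined: a delicate near-diagonal cancellation estimate, which is where the sharp threshold $p > 4/3$ appears, and a compactness argument on the bulk region to convert weak-$\ast$ convergence of $\omega_n$ into strong convergence of $u_n$. The far-field analysis, using the decay $|\mathcal{H}_\psi| = O(|\tilde x|^{-2})$ together with the uniform $L^1$ bound, replaces the compact-support reasoning available in the axisymmetric setting and is the feature specific to working in full space.
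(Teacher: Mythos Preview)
Your overall architecture matches the paper: mollify, invoke Theorem~\ref{app-seq}, verify the smooth $\omega_n$ satisfy the symmetrized identity \eqref{eqtw}, and pass to the limit by splitting near-diagonal / bulk / far-field. However, two of your three region estimates are wrong as stated, and one of them is a genuine gap.

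\textbf{Far-field (the real gap).} Your claim that $|\mathcal H_\psi(t,x,y)|=O(|\tilde x|^{-2})$ for $x$ outside $\mathrm{supp}\,\psi$ is false, and Lemma~\ref{lemaK} does not give it. For such $x$ one has $\nabla\psi(t,x)=0$, so
\[
\mathcal H_\psi(t,x,y)=-\tfrac{1}{2\kappa}\,\mathcal K(x-y)\cdot\bigl(\xi(x)\times\nabla\psi(t,y)\bigr),
\]
and the factor $|\xi(x)|\sim|\tilde x|$ exactly cancels the $|\tilde x-\tilde y|^{-1}$ decay of the two-dimensional part $\mathcal K_2$ of the kernel. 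A short computation (done in the paper) shows that the non-decaying piece equals $-\tfrac{1}{16\pi^3\kappa^3}\partial_3\psi(t,y)+O(|\tilde x|^{-1})$. Consequently $\int_\Omega \mathcal H_\psi(t,x,y)\omega_n(t,y)\,dy$ does \emph{not} tend to zero as $|\tilde x|\to\infty$; rather it tends to $-\alpha_n:=-\tfrac{1}{16\pi^3\kappa^3}\int\partial_3\psi\,\omega_n$. The paper repairs this by adding and subtracting $\alpha_n$ and then using $\int_\Omega\omega_n(t,\cdot)=0$ (which is preserved by the transport equation) to write $\alpha_n\int\omega_n=0$. This is precisely where the hypothesis $\int\omega_0=0$ enters the nonlinear analysis, not merely in setting up the Biot--Savart law. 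Your argument, which relies only on the uniform $L^1$ bound, cannot close without this cancellation.

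\textbf{Near-diagonal (minor but worth correcting).} Your bound $|\mathcal H_\psi|\lesssim |x-y|^{-1}+1$ misses the more singular contribution. From $|\mathcal K(z)|\lesssim |z|^{-2}+|\tilde z|^{-1}$ and $|\xi(y)|\,|\nabla\psi(x)-\nabla\psi(y)|\lesssim |x-y|$ one gets an extra term $|x-y|/|\tilde x-\tilde y|\leq 1+C/|\tilde x-\tilde y|$, so the correct local estimate is
\[
|\mathcal H_\psi(t,x,y)|\leq C\Bigl(\tfrac{1}{|x-y|}+\tfrac{1}{|\tilde x-\tilde y|}+1\Bigr).
\]
It is the two-dimensional singularity $|\tilde x-\tilde y|^{-1}$ that forces $s=p'/2<2$, i.e.\ $p>4/3$, in the Young-inequality step; with your bound the threshold would come out as $p>6/5$, contradicting your own claim that $4/3$ is sharp here.

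\textbf{Bulk region.} Here you and the paper diverge in method, and both routes work. You propose an Aubin--Lions argument (bound $\partial_t\omega_n$ in a negative space, extract strong $L^2_{\mathrm{loc}}$ compactness for $u_n$). The paper instead passes to the limit in $J_1^n$ directly from weak convergence, since after cutting off the diagonal and at infinity the kernel $\mathcal H_\psi(1-\varphi_\delta)\zeta_R(x)\zeta_R(y)$ is smooth and compactly supported on $\Omega\times\Omega$; it then verifies the identification $u=\mathcal K\ast(\xi\omega/\kappa)$ in item~(ii) of Definition~\ref{w-s} by a separate, direct estimate of $\|u_n-u\|_{L^2(U_R)}$ via the Biot--Savart representation, splitting $\mathcal K=\mathcal K_1-\mathcal K_2$ rather than invoking abstract compactness. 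Your Aubin--Lions route is a legitimate alternative and arguably more standard, but it is not what the paper does.
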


\begin{proof}We use Theorem \ref{app-seq} to obtain a sequence of smooth
solutions $\{\w_n\}$ of the vorticity equation satisfying the following
properties
\begin{itemize}
\item [(i)] $\w_n\in  L^\infty(0,\infty;L_{c,per}^p(\Omega;\mathbb R))$, for all
$n$;
\item [(ii)] $\{\w_n\}_{n=1}^{\infty}$ is uniformly bounded in
$L^{\infty}(0,\infty, L_{per}^q(\Omega;\mathbb R))$, for all $1\leq q\leq p$;
\item [(iii)] the velocity field $u_n(x):=\int_{\Omega}\mathcal
K(x-y)\times (\xi(y)/\kappa) \w_n(t,y)dy$ is helical and has vanishing
helical swirl for all $n$, and $\{u_n\}_{n=1}^{\infty}$ is uniformly bounded in
$L^{\infty}(0,\infty;L_{per}^2(\Omega;\mathbb R^3))$,
\item [(iv)] there exists a subsequence,  still denoted
$\{\w_n\}$, which converges
weak-star in $L^{\infty}(0,\infty; $ $L_{per}^q(\Omega; \mathbb R))$, for all $1<q\leq p$, to a limit
$\w$.
\end{itemize}

We claim that $\w$ is a weak solution of the
vorticity equation. To see this, we observe that $\w_n$ satisfies (\ref{eqtw}),
for all $n\in \mathbb N$, and, up to a subsequence,
$\w_n\stackrel{*}{\rightharpoonup}\w$ in $L^{\infty}(0,\infty;
L_{per}^q(\Omega; \mathbb R))$. Clearly, the linear terms of (\ref{eqtw})
converge, as $n\rightarrow \infty$. Hence, to obtain that $\w$
satisfies \eqref{eqtw} it is enough to prove that the nonlinear term of
(\ref{eqtw}) converges, as $n\rightarrow \infty$, and that the following
identity holds true
\begin{eqnarray*}
\lim_{n\rightarrow \infty}
\int_{0}^{\infty}\int_{\Omega}\int_{\Omega}\mathcal
H_{\psi}(t,x,y)\w_n(t,y)\w_n(t,x)dydxdt=
\int_{0}^{\infty}\int_{\Omega}\int_{\Omega}\mathcal
H_{\psi}(t,x,y)\w(t,y)\w(t,x)dydxdt.
\end{eqnarray*}

The remainder of the proof consists in showing that we can pass to the limit
in the nonlinear term, in order to establish this identity. Let $\psi \in \mathcal
C^{\infty}_{c,per}([0,\infty)\times\Omega;\mathbb R)$ and fix $\rho>0$ such that
$\mbox{supp }\psi\subset C_{\rho}$, where $C_{\rho}=\{x\in\mathbb R^3 \colon
|\tilde x|\leq \rho \mbox{ and } |x_3|\leq \pi\kappa\}$. Define
\[\alpha_n:=\dfrac{1}{16\pi^3\kappa^3}\int_{C_{\rho}}
\w_n(t,x)\partial_3\psi(x)dx \mbox{ and }
\alpha:=\dfrac{1}{16\pi^3\kappa^3}\int_{C_{\rho}}
\w(t,x)\partial_3\psi(x)dx,\]
and observe that $\lim_{n\rightarrow \infty}\alpha_n=\alpha$.

Fix $0<\delta \ll 1$ and $R\gg 2\max\{\rho,2\pi\kappa\}$ and let
$\varphi_{\delta}:
[0,\infty)\rightarrow [0,1]$ and $\zeta_R: \Omega\rightarrow [0,1]$ be smooth cutoff functions, so that

\begin{eqnarray}\label{varphi}\begin{array}{rr} \varphi_{\delta}(r)\equiv 1, & r\leq \delta,\\ \\
                            0\leq \varphi_{\delta}(r) \leq 1, & \delta<r\leq 2\delta\\ \\
                             \varphi_{\delta}(r) \equiv0, & r>2\delta;
                            \end{array}
                            \end{eqnarray}
\begin{eqnarray}\label{zeta}\begin{array}{rr}
                            \zeta_{R}(x) \equiv  1, & |x|\leq R,\\ \\
                            0 \leq \zeta_{R}(x)  \leq 1, &  R<|x|\leq 2R,\\ \\
                             \zeta_{R}(x)\equiv 0, & |x|>2R.
                            \end{array}\end{eqnarray}

Using this notation we have the following decomposition:

\[\int_{\Omega}\int_{\Omega}\mathcal H_{\psi}(t,x,y)\w_n(t,y)\w_n(t,x)dydx=
\int_{\Omega}\int_{\Omega}\mathcal
H_{\psi}(t,x,y)\varphi_{\delta}(|x-y|)\w_n(t,y)\w_n(t,x)dydx\]
\[+ \int_{\Omega}\int_{\Omega}\mathcal H_{\psi}(t,x,y)
(1-\varphi_{\delta}(|x-y|))\w_n(t,y)\w_n(t,x)dydx\equiv I^n_1+I^n_2.\]

Using the definition of $\mathcal H_{\psi}$ and the Generalized Young
Inequality with $1/s+1/p=1+1/p'$, we obtain that
\[|I^n_1|\leq \int_{\Omega}\int_{\Omega}|\mathcal H_{\psi}(t,x,y)|
\varphi_{\delta}(|x-y|)|\w_n(t,y)||\w_n(t,x)|dydx\]
\[\leq C\int_{\Omega}\int_{\Omega}\left(\dfrac{1}{|x-y|}+
\dfrac{1}{|\tilde x-\tilde y|}+1\right)
\varphi_{\delta}(|x-y|)|\w_n(t,y)||\w_n(t,x)|dydx\]
\[\leq C\left\|\left(\dfrac{1}{|\cdot|}+
\dfrac{1}{|\tilde \cdot|}+1\right)
\varphi_{\delta}(|\cdot|)\right\|_{L^s}\|\w_n(t,\cdot)\|_p\|\w_n(t,.)\|_p.\]

If $s<2$ then we find that
\[\left\|\left(\dfrac{1}{|\cdot|}+ \dfrac{1}{|\tilde \cdot|}+1\right)
\varphi_{\delta}(|\cdot|)\right\|^s_{L^s}\leq C\delta^{2-s}.\]

Recall that
$\|\w_n(t,\cdot)\|_p\leq C$. Recall, also, that we assumed that $p>4/3$, so that $p'<4$. Now, since
$1/s+1/p=1+1/p'$ we have that $s=p'/2$. Therefore we have, indeed, $s<2$ and, hence, $I^n_1\leq C \delta^{(2-s)/s} = o(1)$ as $\delta \to 0$.

Next we estimate $I^n_2$. Since $\int_\Omega\w_n(t,x)dx=0$, we can rewrite
$I^n_2$ as
\[I^n_2= \int_{\Omega}\left(\int_{\Omega}\mathcal H_{\psi}(t,x,y)
(1-\varphi_{\delta}(|x-y|))\w_n(t,y)dy+\alpha_n\right)\w_n(t,x)dx.\]

Now, we decompose $I^n_2$ as
\[I^n_2= \int_{\Omega}\left(\int_{\Omega}\mathcal H_{\psi}
(t,x,y)(1-\varphi_{\delta}(|x-y|))\w_n(t,y)dy+
\alpha_n\right)\zeta_R(x)\w_n(t,x)dx\]
\[+\int_{\Omega}\left(\int_{\Omega}\mathcal H_{\psi}(t,x,y)
(1-\varphi_{\delta}(|x-y|))\w_n(t,y)dy+\alpha_n\right)(1-\zeta_R(x))
\w_n(t,x)dx\equiv I^n_{21}+I^n_{22}.\]

We start by estimating $I^n_{21}$. We decompose the integral as
\[I^n_{21}= \int_{\Omega}\left(\int_{\Omega}\mathcal H_{\psi}
(t,x,y)(1-\varphi_{\delta}(|x-y|))\zeta_{R}
(y)\w_n(t,y)dy\right)\zeta_R(x)\w_n(t,x)dx\]
\[+\int_{\Omega}\left(\int_{\Omega}\mathcal H_{\psi}
(t,x,y)(1-\varphi_{\delta}(|x-y|))(1-\zeta_{R}(y))\w_n(t,y)dy+
\alpha_n\right)\zeta_R(x)\w_n(t,x)dx\equiv J_1^n+J_2^n.\]

We begin by analyzing $J_1^n$. Observe that the function $\mathcal
H_{\psi}(t,x,y)(1-\varphi_{\delta}(|x-y|))\zeta_{R} (y)\zeta_R(x)\in
C^\infty_c([0,\infty)\times\Omega\times \Omega)$. Therefore we
can pass to the limit as $n\rightarrow \infty$ and obtain that
\[\lim_{n\rightarrow \infty}J_1^n=\int_{\Omega}\int_{\Omega}\mathcal H_{\psi}
(t,x,y)(1-\varphi_{\delta}(|x-y|))\zeta_{R}(y)\zeta_R(x)\w(t,y)\w(t,x)dydx.\]

Next, we consider $J_2^n$. Observe
that the integrand in the inner integral, with respect to $y$, vanishes for $|y|\leq
R$. Also, for $y\in
C_{\rho}^c = \mathbb{R}^3 \setminus C_{\rho}$, we have that $\mathcal H_{\psi}(t,x,y)=1/(2\kappa)\mathcal
K(x-y)\cdot(\xi(y)\times\nabla\psi(t,x))$. Moreover, we note that $\nabla
\psi(x)$ vanishes for all $x\in C_{\rho}^c$. Therefore, it is enough to estimate
$\mathcal H_\psi$ for $|y|>R$ and $x\in C_{\rho}$. In this case, $|x-y|\geq
|y|-|x|\geq |y|/2$ and $|\tilde x-\tilde y|\geq
|x-y|-2\pi\kappa\geq |y|/4$. Thus, using the notation $\mathcal K_1$ and
$\mathcal K_2$, introduced in the proof of Lemma \ref{lemaK}, we have
\[\mathcal H_{\psi}(t,x,y)= \dfrac{1}{2\kappa}\mathcal K_1(x-y)\cdot
(\xi(y)\times\nabla\psi(t,x))-\dfrac{1}{2\kappa}\mathcal K_2(x-y)\cdot
(\xi(y)\times\nabla\psi(t,x)).\]
Introduce $G_1:=\frac{1}{2\kappa}\mathcal K_1(x-y)\cdot
(\xi(y)\times\nabla\psi(t,x))$ and recall the definition of $\mathcal K_2$ to
get that
\[\mathcal H_{\psi}(t,x,y)= G_1-\dfrac{1}{16\pi^3\kappa^3}\dfrac{(\tilde
x-\tilde y,0)}{|\tilde x-\tilde y|^2}\cdot (\xi(y)\times\nabla\psi(t,x))\]
\[=G_1-\dfrac{1}{16\pi^3\kappa^3}\dfrac{1}{|\tilde x-\tilde y|^2}\left((\tilde
x-\tilde y,0)\cdot (-\tilde y,0)\partial_3\psi(t,x)+\kappa(\tilde x-\tilde
y,0)\cdot (-\partial_2\psi(t,x),\partial_1\psi(t,x),0)\right)\]
\[=G_1-\dfrac{1}{16\pi^3\kappa^3}\dfrac{1}{|\tilde x-\tilde y|^2}\left(|\tilde
x-\tilde y|^2\partial_3\psi(t,x)+(\tilde x-\tilde y,0)\cdot (-\tilde
x,0)\partial_3\psi(t,x)\right.\]
\[\left.+\kappa(\tilde x-\tilde y,0)\cdot
(-\partial_2\psi(t,x),\partial_1\psi(t,x),0)\right).\]

Let $G_2:=-\dfrac{1}{16\pi^3\kappa^3|\tilde x- \tilde y|^2}(-(\tilde x-\tilde
y,0)\cdot (\tilde x,0)\partial_3\psi(t,x)+\kappa(\tilde x-\tilde
y,0)\cdot(-\partial_2\psi(t,x),\partial_1\psi(t,x),0))$. With this notation we now have
\[\mathcal H_{\psi}(t,x,y)=G_1-\dfrac{1}{
16\pi^3\kappa^3}\partial_3\psi(t,x)+G_2.\]

Observe that
\begin{eqnarray}\label{est3}
|G_1|\leq C\dfrac{1}{|\tilde
x-\tilde y|^2}|y|\leq C\dfrac{1}{|y|} \mbox{ and } |G_2|\leq
C\dfrac{1}{|y|}.
\end{eqnarray}

We rewrite $J_2^n$ as
\[J_2^n=\int_{\Omega}\left(\int_{\Omega}(G_1+G_2)\zeta_R(x)\w_n(t,
x)dx\right)(1-\zeta_{R}(y))\w_n(t,y)dy\]
\[-\int_{\Omega}\alpha_n(1-\zeta_{R
} (y))\w_n(t, y)dy+\alpha_n\int_{\Omega}
\zeta_R(x)\w_n(t,x)dx.\]
Using (\ref{est3}) we obtain the following estimate
\[|J_2^n|\leq \dfrac{C}{R}\|\w_n(t,.)\|_1^2+2\left|\alpha_n\int_ {\Omega}
\zeta_R(x)\w_n(t,x)dx\right|\leq \dfrac{C}{R}+C\left|\int_ {\Omega}
\zeta_R(x)\w_n(t,x)dx\right|.\]

It is clear that
\[\lim_{n\rightarrow\infty} \int_ {\Omega}\zeta_R(x)\w_n(t,x)dx=\int_{\Omega}
\zeta_R(x)\w(t,x)dx.\]
Moreover, using the Dominated Convergence theorem  we obtain  that
\[\lim_{R\rightarrow \infty}\int_{\Omega}\zeta_R(x)\w(t,x)dx=\int_
{\Omega}\w(t,x)dx=0.\]
Thus, $\lim_{R\rightarrow \infty}\lim_{n\rightarrow \infty} |J_2^n|=0$.

Next we treat $I^n_{22}$. In order to estimate $I^n_{22}$, we observe that, since  $R\gg 2\rho$, it follows that
$\mathcal H_{\psi}(t,x,y)=0$ for all $|x|>R$ and $y\in
C_{\rho}^c$. Furthermore, for $|x|>R$ and $y \in C_{\rho}$ we have that
$|x-y|\geq |x|/2$ and $|\tilde x-\tilde y|=|x-y|-2\pi\kappa\geq
|x|/4$. Hence, $1-\varphi_\delta(|x-y|)=1$ for all  $|x|>R$ and $y \in
C_{\rho}$. In this
region we can rewrite $\mathcal H_{\psi}$ as
\[\mathcal H_{\psi}(t,x,y)=\dfrac{1}{2\kappa}(\mathcal K_1(x-y)-\mathcal
K_2(x-y))\cdot \left(\xi(y)\times(-\nabla\psi(t,y))-(\xi(x)-\xi(y))\times
\nabla \psi(t,y)\right)\]
\[=\dfrac{1}{2\kappa}(\mathcal K_1(x-y)\cdot
\left(-\xi(y)\times\nabla\psi(t,y)-(\xi(x)-\xi(y))\times  \nabla
\psi(t,y)\right)+\mathcal K_2(x-y)\cdot (\xi(y)\times\nabla\psi(t,y)))\]
\[+\dfrac{1}{2\kappa}\mathcal K_2(x-y)\cdot( (\xi(x)-\xi(y))\times
\nabla \psi(t,y))\equiv F_1+F_2.\]

Observe that, for $|x|>R$ and $y\in C_{\rho}$, we get
\[|F_1|\leq C\dfrac{1}{|\tilde x-\tilde y|^2}(|y|+|\tilde
x-\tilde y|)+C\dfrac{1}{|\tilde x-\tilde y|}|y|\leq
C\left(\dfrac{1}{R^2}+\dfrac{1}{R}\right),\]
while $F_2=-\dfrac{1}{16\pi^3\kappa^3}
\partial_3 \psi(t,y),$  so that $\int_{C_\rho} F_2\w_n(t,y)dy=-\alpha_n$. Thus,
\[|I^n_{22}|\leq \int_{\Omega}\int_{C_{\rho}}|F_1|
|\w_n(t,y)||(1-\zeta_R(x))||\w_n(t,x)|dydx\]
\[+\left|\int_{\Omega}\left(\int_{C_{\rho}}F_2
\w_n(t,y)dy+\alpha_n\right)(1-\zeta_R(x))\w_n(t,x)dx\right|\]
\[\leq
\int_{\Omega}\int_{C_{\rho}}C\left(\dfrac{1}{R^2}+\dfrac{1}{R}\right)
|\w_n(t,y)||(1-\zeta_R(x))||\w_n(t,x)|dydx\]
\[\leq C\left(\dfrac{1}{R^2}+\dfrac{1}{R}
\right)\|\w_n(t,\cdot)\|^2_1\leq C\left(\dfrac{1}{R^2}+\dfrac{1}{R}
\right) \leq \dfrac{C}{R},\]
if $R>>1$.

We can now pass to the limit as $n\rightarrow \infty$ and use all
the estimates obtained to get
\[\lim_{n\rightarrow \infty}\int_{\Omega}\int_{\Omega}\mathcal
H_{\psi}(t,x,y)\w_n(t,y)\w_n(t,x)dydx=\lim_{n\rightarrow\infty}
(I^n_1+J_1^n+J_2^n+I^n_{22})\]
\[=C\delta^{(2-s)/s}+\lim_{n\rightarrow\infty}\int_{\Omega}\int_{\Omega}\mathcal H_{\psi}
(t,x,y)(1-\varphi_{\delta}(|x-y|))\zeta_{R}
(y)\zeta_R(x)\w_n(t,y)\w_n(t,x)dydx\]
\[+\left(\dfrac{C}{R}+C\lim_{n\rightarrow\infty}\left|\int_{\Omega}\w_n(t,
x)\zeta_R(x)dx\right|\right)+\dfrac{C}{R}\]
\[=\int_{\Omega}\int_{\Omega}\mathcal H_{\psi}
(t,x,y)(1-\varphi_{\delta}(|x-y|))\zeta_{R}
(y)\zeta_R(x)\w(t,y)\w(t,x)dydx\]
\[+C\delta^{(2-s)/s} +\dfrac{C}{R}+C\left|\int_{\Omega}\w(t,x)\zeta_R(x)dx\right|.\]

Now, let us pass to the limit as $R\rightarrow \infty$ and $\delta\rightarrow
0$ to obtain that
\[\lim_{n\rightarrow \infty}\int_{\Omega}\int_{\Omega}\mathcal
H_{\psi}(t,x,y)\w_n(t,y)\w_n(t,x)dydx\]
\[=\lim_{R\rightarrow \infty,\delta\rightarrow
0}\left(\int_{\Omega}\int_{\Omega}\mathcal H_{\psi}
(t,x,y)(1-\varphi_{\delta}(|x-y|))\zeta_{R}
(y)\zeta_R(x)\w(t,y)\w(t,x)dydx\right)\]
\[+\lim_{R\rightarrow \infty,\delta\rightarrow 0}\left(C\delta^{(2-s)/s}+\dfrac{C}{R}+C\left|\int_{\Omega}\w(t,x)\zeta_R(x)dx\right|\right)\]
\[=\lim_{R\rightarrow \infty,\delta\rightarrow
0}\left(\int_{\Omega}\int_{\Omega}\mathcal H_{\psi}
(t,x,y)(1-\varphi_{\delta}(|x-y|))\zeta_{R}
(y)\zeta_R(x)\w(t,y)\w(t,x)dydx\right).\]

On the other hand we have that
\[\int_{\Omega}\int_{\Omega}\mathcal H_{\psi}
(t,x,y)\w(t,y)\w(t,x)dydx=
\int_{\Omega}\int_{\Omega}\mathcal H_{\psi}
(t,x,y)\varphi_{\delta}(|x-y|)\w(t,y)\w(t,x)dydx\]
\[+\int_{\Omega}\int_{\Omega}\mathcal H_{\psi}
(t,x,y)(1-\varphi_{\delta}(|x-y|))\zeta_{R}
(y)\zeta_R(x)\w(t,y)\w(t,x)dydx\]
\[+\int_{\Omega}\left(\int_{\Omega}\mathcal H_{\psi}
(t,x,y)(1-\varphi_{\delta}(|x-y|))(1-\zeta_{R}
(y))\w(t,y)dy+\alpha\right)\zeta_R(x)\w(t,x)dx\]
\[+\int_{\Omega}\left(\int_{\Omega}\mathcal H_{\psi}
(t,x,y)(1-\varphi_{\delta}(|x-y|))\w(t,y)dy+\alpha\right)(1-\zeta_R(x))\w(t,
x)dx\]
using the same idea from the estimates obtained for the sequence $\w_n$,
\[=C\delta^{(2-s)/s}+\int_{\Omega}\int_{\Omega}\mathcal H_{\psi}
(t,x,y)(1-\varphi_{\delta}(|x-y|))\zeta_{R}
(y)\zeta_R(x)\w(t,y)\w(t,x)dydx+\]
\[+\left(\dfrac{C}{R}+C\left|\int_{\Omega}\w(t,x)\zeta_R(x)dx\right|\right)+\dfrac{C}{R}.\]

Therefore,
\[\int_{\Omega}\int_{\Omega}\mathcal H_{\psi}
(t,x,y)\w(t,y)\w(t,x)dydx\]
\[=\lim_{R\rightarrow \infty, \delta\rightarrow
0}\int_{\Omega}\int_{\Omega}\mathcal H_{\psi}
(t,x,y)(1-\varphi_{\delta}(|x-y|))\zeta_{R}
(y)\zeta_R(x)\w(t,y)\w(t,x)dydx.\]

Finally, we conclude that
\[\lim_{n\rightarrow \infty}\int_{\Omega}\int_{\Omega}\mathcal
H_{\psi}(t,x,y)\w_n(t,y)\w_n(t,x)dydx=\int_{\Omega}\int_{\Omega}\mathcal
H_{\psi}
(t,x,y)\w(t,y)\w(t,x)dydx.\]

It is clear from the {\it a priori} estimates that $\{u_n\}$ is uniformly bounded in
$L^{\infty}(0,\infty;L_{per}^2(\Omega;\mathbb R^3))$ so that there exists a subsequence, which we still denote
by $\{u_n\}$, that converges to $u\in L^{\infty}(0,\infty;L_{per}^2(\Omega;\mathbb R^3))$. It remains to prove 
that the limit $u$ is the velocity associated with the vorticity $\omega$, that is, $
u(t,x)=\int_{\Omega}\mathcal K(x-y)\times(\xi(y)/\kappa)\w(t,y) dy$. 

Fix $R>0$ and set 
$U_R=\{x\in \Omega: |\tilde x|\leq R\}$. Let us fix $\delta>0$ such that
$0<\delta\ll R$ and let $\varphi_{\delta}$ be as defined  in (\ref{varphi}), so that 
\[\|u_n(t,\cdot)-u(t,\cdot)\|_{L^2(U_R)}^2=\int_{U_R}\left|\int_{\Omega}
\mathcal K(x-y)\times
\dfrac{\xi(y)}{\kappa}(\w_n(t,y)-\w(t,y))dy\right|^2dx\]
\[\leq 2\int_{U_R}\left|\int_{\Omega}\varphi_{\delta}(|x-y|)\mathcal
K(x-y)\times
\dfrac{\xi(y)}{\kappa}(\w_n(t,y)-\w(t,y))dy\right|^2dx\]\[+2\int_{U_R}
\left|\int_{\Omega}(1-\varphi_{\delta}(|x-y|))\mathcal K(x-y)\times
\dfrac{\xi(y)}{\kappa}(\w_n(t,y)-\w(t,y))dy\right|^2dx\equiv \mathcal
I^n_1+\mathcal I^n_2.\]

We start by estimating the term $\mathcal I^n_1$. Observe that for $|x|\leq R$
and $|x-y|<2\delta$ it holds that $|\xi(y)|\leq |x-y|+|x|\leq
2\delta+R\leq 2R$ and hence 
\[\mathcal I_1^n\leq
C(R)\int_{U_R}\left(\int_{\Omega}|\varphi_{\delta}(|x-y|)||\mathcal K(x-y)|
|\w_n(t,y)-\w(t,y)|dy\right)^2dx\]\[\leq C(R)\|\varphi_{\delta}\mathcal
K\|^2_{L^q}\|\w_n-\w\|^2_{L^p},\]
where in the last inequality we used the Generalized Young inequality with
$ 1+1/2=1/q+1/p$. Since $\|\varphi_{\delta}\mathcal K\|_{L^q}\leq C\delta^{3-2q}$ and $\|\w_n-\w\|_{L^p}\leq \|\w_n\|_{L^p}+\|\w\|_{L^p}\leq C$
then $\mathcal I_1^n\leq C \delta^{3-2q}$, for all $n\in\mathbb N$. It is clear that the condition $p>4/3$ implies $q<3/2$.
Therefore, 
\[\lim_{\delta\rightarrow 0}\lim_{n\rightarrow \infty}\mathcal I_1^n=  0.\]

Now, we will estimate $\mathcal I_2^n$. Recall the notation introduced in Lemma
\ref{lemaK}, $\mathcal K=\mathcal K_1-\mathcal K_2$, so
\[\mathcal I_2^n\leq
4\int_{U_R}\left|\int_{\Omega}(1-\varphi_{\delta}(|x-y|))\mathcal K_1(x-y)\times
\dfrac{\xi(y)}{\kappa}(\w_n(t,y)-\w(t,y))dy\right|^2dx\]
\[+4\int_{U_R}\left|\int_{\Omega}(1-\varphi_{\delta}(|x-y|))\mathcal K_2(x-y)
\times \dfrac{\xi(y)}{\kappa}(\w_n(t,y)-\w(t,y))dy\right|^2dx\equiv \mathcal
F^1_n + \mathcal F^2_n.\]

We will see that $\mathcal F^1_n$ and  $\mathcal  F^2_n$ converge to
zero when $n\rightarrow \infty$. We start with $\mathcal F^1_n$. Observe that
for $|x-y|>\delta$ we have
\[|\mathcal K_1(x-y)\times \xi(y)|= |\mathcal K_1(x-y)\times
(\xi(y)-\xi(x)+\xi(x))|\] \[\leq C\dfrac{1}{|\tilde x-\tilde y|^2}(|\tilde
y-\tilde x|+|\xi(x)|) \leq C(R)\left(\dfrac{1}{|\tilde x-\tilde
y|}+\dfrac{1}{|\tilde x-\tilde y|^2}\right).\]
Thus, for every $x\in U_R$ fixed,
$(1-\varphi_{\delta}(|x-\cdot|))\mathcal K_1(x-\cdot)\times \xi(\cdot)\in
L^{r}(\Omega)$, for all $r>2$. Moreover, since $\{\w_n\}$ is
uniformly bounded in $L^{\infty}(0,\infty;L^q(\Omega))$ for all $1\leq q\leq
p$ we obtain that $\w_n(t,\cdot)\stackrel{*}{\rightharpoonup} \w(t,\cdot)$ in
$L^q(\Omega)$,
for all $1<q\leq p$, so that we can choose $q$ in such a way that $1<q\leq p$ and
$q'>2$. Therefore, we obtain that $\lim_{n\rightarrow \infty}\int_\Omega
(1-\varphi_{\delta}(|x-y|))\mathcal K_1(x-y)\times
(\xi(y)/\kappa)(\w_n(t,y)-\w(t,y))dy=0$ for every $x\in U_R$.
Moreover, we have that
\[\left|\int_\Omega (1-\varphi_{\delta}(|x-y|))\mathcal K_1(x-y)\times
\frac{\xi(y)}{\kappa}(\w_n(t,y)-\w(t,y))dy\right|\]\[\leq
C\|(1-\varphi_{\delta}(|x-\cdot|))\mathcal K_1(x-\cdot)\times
\xi\|_{L^{q'}}\|\w_n(t,\cdot)-\w(t,\cdot)\|_{L^q}\leq C_\delta,\]
where $1/q+1/q'=1$, $q\leq p$ and $q'>2$, so that the last
sequence is uniformly bounded in $n$ by a constant which is an integrable
function in $U_R$. Thus, we can apply
the Dominated Convergence theorem to obtain that $\lim_{n\rightarrow
\infty}\mathcal F^1_n=0$.

Now we will prove the convergence of the term $\mathcal F^2_n$. Fix $M>0$,
$M\gg R$ and consider $\zeta_M$ as defined in (\ref{zeta}). Let us split the
integral in the following way:
\[\int_{\Omega}(1-\varphi_{\delta}(|x-y|))\zeta_{M}(y)
\mathcal K_2(x-y)\times \dfrac{\xi(y)}{\kappa}(\w_n(t,y)-\w(t,y))dy\]
\[+\int_{\Omega}(1-\varphi_{\delta}(|x-y|))(1-\zeta_M(y))\mathcal
K_2(x-y)\times
\dfrac{\xi(y)}{\kappa}(\w_n(t,y)-\w(t,y))dy\equiv \mathcal J^1_n(t,x)+\mathcal
J^2_n(t,x).\]

In order to prove the convergence of $\mathcal J^1_n$, first we recall that
$\|\w_n(t,\cdot)\|_{L^1}\leq C$ implies that
$\lim_{n\rightarrow\infty}\int_\Omega \w_n(t,y)f(y)dy=
\int_{\Omega}\w(t,y)f(y)dy$ for all $f\in C_0(\Omega)$. Then, observe that
$f_x(y):=(1-\varphi_{\delta}(|x-y|))\zeta_{M}(y)\mathcal K_2(x-y)\times
(\xi(y)/\kappa) \in \mathcal C_0(\Omega)$ for all $x\in U_R$, thus
\[\mathcal J^1_n(t,x)=\int_{\Omega}f_x(y)(\w_n(t,y)-\w(t,y))dy\rightarrow 0
\mbox{ as }n\rightarrow \infty.\]

Finally, observe that
\[\mathcal K_2(x-y)\times \dfrac{\xi(y)}{\kappa}=\dfrac{1}{16\pi^3\kappa^3}
\dfrac{1}{|\tilde x-\tilde y|^2}(\tilde x-\tilde y,0)\times
(\xi(y)-\xi(x)+\xi(x))\]
\[=\dfrac{1}{16\pi^3\kappa^3}(0,0,1)+\dfrac{1}{16\pi^3\kappa^3}\dfrac{1}{|\tilde
x-\tilde y|^2}(\tilde x-\tilde y,0)\times \xi(x).\]

Moreover, since $x\in U_R$ then $(1-\varphi_{\delta}(x-y))(1-\zeta_M(y))=
(1-\zeta_M(y))$. Thus,
\[|\mathcal J^2_n(t,x)|\leq \left|\dfrac{1}{16\pi^3\kappa^3}\int_{\Omega}
(1-\zeta_M(y))(\w_n(t,y)-\w(t,y))dy\right|\]
\[+C\int_{\Omega}(1-\zeta_M(y))\dfrac{1}{|\tilde x-\tilde y|}(|\w_n(t,y)|+
|\w(t,y)|)dy\]
\[\leq \left|\dfrac{1}{16\pi^3\kappa^3}\int_{\Omega}(1-\zeta_M(y))(\w_n(t,y)-
\w(t,y))dy\right|+\dfrac{C}{M}.\]

Observe that, as $\int_{\Omega}\w_n(t,y)dy=0=\int_{\Omega}\w(t,y)dy$ and
$\zeta_M\in L^{p'}$, we have
\[\lim_{n\rightarrow \infty}\int_{\Omega}(1-\zeta_M(y))(\w_n(t,y)-\w(t,y))dy=
-\lim_{n\rightarrow \infty}\int_{\Omega}\zeta_M(y)(\w_n(t,y)-\w(t,y))dy=0.\]

Thus, $\displaystyle \lim_{n\rightarrow\infty} \mathcal J^2_n(t,x)=\mathcal
O(M^{-1})$. Therefore, we find that
\[\lim_{n\rightarrow \infty}\int_{\Omega}(1-\varphi_{\delta}(x-y))\mathcal
K_2(x-y) \times \dfrac{\xi(y)}{\kappa}(\w_n(t,y)-\w(t,y))dy=\lim_{M\rightarrow
\infty} \lim_{n\rightarrow \infty}(\mathcal J^1_n(t,x)+\mathcal J^2_n(t,x))=0.\]

Furthermore, it follows from the
calculations above that
\[\left|\int_{\Omega}(1-\varphi_{\delta}(|x-y|))\mathcal K_2(x-y)
\times \dfrac{\xi(y)}{\kappa}(\w_n(t,y)-\w(t,y))dy\right|\leq |\mathcal
J^1_n(t,x)|+|\mathcal J^2_n(t,x)| \]\[\leq
C\left(\|f_x\|_{L^{\infty}}+\frac{1}{M}+1\right)\|\w_n-\w\|_{L^1}\leq
C_{\delta, M}.\]

Thus, by the Dominated Convergence theorem, we conclude that
$\lim_{n\rightarrow \infty}  \mathcal F^2_n=0$.

Finally, we end up with
\[\lim_{n\rightarrow \infty}\|u_n(t,\cdot)-u(t,\cdot)\|^2_{L^2(U_R)}=
\lim_{\delta\rightarrow 0}\lim_{n\rightarrow \infty}(\mathcal I^n_1+\mathcal
I^n_2)=0.\]

It is not hard to see that all the estimates used were uniform in $t$, so that
\[\lim_{n\rightarrow \infty}\|u_n-u\|_{L^{\infty}(0,\infty;
L^2(U_R))}=0.\]

Therefore, $u_n\rightarrow u$ in $L^{\infty}(0,\infty;L^2_{loc,per}(\Omega,
\mathbb R^3))$ hence follows the desired result.
\end{proof}

\begin{remark} We observe that, from the proof of the last theorem, it follows that
$u$ is a weak solution of the Euler equations in the velocity formulation.
Indeed, 
since each $u_n$ is a smooth solution of the Euler equations, we find that, for all test function $\Phi\in \mathcal
C_{c,per}^\infty([0,\infty)\times\Omega,\mathbb R^3)$ such that div $\Phi=0$, the following identity holds true:
\[\int_0^\infty\int_\Omega  \Phi_t(t,x)\cdot u_n(t,x)dxdt+\int_0^{\infty}
\int_{\Omega}[(u_n(t,x)\cdot \nabla)\Phi(t,x)]\cdot u_n(t,x)dxdt+\int_{\Omega}
\Phi(0,x)\cdot u_n(0,x)dx=0.\]
Now, since
$u_n\rightarrow u$ in $L^\infty(0,\infty;L_{loc,per}^2(\Omega, \mathbb R^3)$, we can pass to the limit, as $n\rightarrow \infty$, on the left-hand-side
above and obtain that $u$ is a weak solution of the Euler equations.
\end{remark}


\section{Second existence theorem: general case}\label{sec4}
In this section we will prove that the existence result can   also be obtained in the case where the initial vorticity does
not have vanishing integral. In order to do so, we must introduce another definition
of weak solution  of the vorticity equation, adapted to this case.

We can not use the Biot-Savart law in the case where the integral of the
vorticity is not zero. In order to deal with this issue we define a new operator
which plays the role of the Biot-Savart law. The idea is to decompose
the vorticity in two terms, one with zero integral and the other being the
vorticity associated with a helical velocity field which is a smooth, radially
symmetric, steady solution of the Euler equations. This construction is 
inspired by the radial-energy decomposition of a velocity field (see
\cite{Majda}). The velocity
field can then be recovered from the vorticity in the following simple way. We
apply the Biot-Savart law to the term which has zero integral and, for the other
term,
we already have the associated velocity field. With this idea in mind, let us
give the following definition:

\begin{definition} Let $p>4/3$. We define the operator
\begin{eqnarray}
\varXi:L_{per}^p(\Omega;\mathbb R)\cap L_{per}^1(\Omega;\mathbb R)
&\longrightarrow & L_{loc,per}^2(\Omega;\mathbb R)\\\nonumber
\w&\longmapsto &\varXi [\w]
\end{eqnarray}
as \[\varXi [\w](x):=\int_{\Omega}\mathcal K(x-y)\times\dfrac{ \xi(y)}{\kappa}
(\w(y)-\varphi(|\tilde y|))dy + \bar u(x),\] where $\varphi:
[0,\infty)\rightarrow \mathbb R$ is any smooth function with compact
support in $(0,\infty)$ such that
\[\int_0^\infty \varphi(r)rdr=\int_{\mathbb R^2}\w(\tilde x,0)d\tilde x,\]
and $\bar u$ is defined for all $x\in
\Omega$ by
\[\bar u(x)=\left(\dfrac{\tilde
x^{\perp}}{|\tilde
x|^2},\dfrac{1}{\kappa}\right)\int_0^{|\tilde x|}\varphi(r)rdr,\]
where $\tilde x^\perp=(-x_2,x_1)$.
\end{definition}

\begin{remark}
The operator $\varXi$ is well-defined. Indeed, since $\w\in
L_{per}^p(\Omega;\mathbb R)\cap L_{per}^1(\Omega;\mathbb
R)$ and $\varphi\in C_c^\infty((0,\infty))$ it is not hard to see
that  $\mathcal K\ast \xi(\cdot)/\kappa(\w(\cdot)-\varphi(|\tilde
\cdot|))\in L_{loc,per}^2(\Omega;\mathbb
R)$. Now, let us prove that the definition of $\varXi$ does not depend  on
the particular choice of $\varphi$. To do so, consider $\varphi, \zeta \in
\mathcal
C^{\infty}([0,\infty))$ with compact support in $(0,\infty)$ and such that
\[\int_0^{\infty}\varphi(r)rdr=\int_0^{\infty}\zeta(r)rdr=\int_{\mathbb
R^2}\w(\tilde x,0)d\tilde x.\]
Set \[\bar
u(x)=\left(\dfrac{\tilde
x^{\perp}}{|\tilde x|^2},\dfrac{1}{\kappa}\right)\int_0^{|\tilde
x|}\varphi(r)rdr \mbox{ and }\bar v(x)=\left(\dfrac{\tilde
x^{\perp}}{|\tilde x|^2},\dfrac{1}{\kappa}\right)\int_0^{|\tilde
x|}\zeta(r)rdr.\]
Observe that it is enough to prove that
\begin{equation}\label{inddef}
 \int_{\Omega}\mathcal
K(x-y)\times\dfrac{ \xi(y)}{\kappa} (\zeta(|\tilde y|)-\varphi(|\tilde y|))dy
+\bar u(x)-\bar v(x)=0.
\end{equation}
First, observe that $ \curl \bar u(y)= \xi(y)/\kappa\varphi(|\tilde
y|)$ and $ \curl \bar v(y)= \xi(y)/\kappa\zeta(|\tilde y|)$. Hence 
\eqref{inddef} is equivalent to
\begin{equation}\label{id01}
 \int_{\Omega}\mathcal K(x-y)\times \curl(\bar v(y)-\bar u(y))dy=\bar
v(x)-\bar u(x).
\end{equation}
Finally, observe that the vector field defined by $\Phi:=\curl (\bar v-\bar
u)$ is such that $\int_\Omega \Phi(x)dx=0$,
$\di \Phi=0$, $\Phi\in C^\infty_{c,per}(\Omega;\mathbb R^3)$ and $|\Phi(x)|$
goes to zero as $|\tilde x|\rightarrow \infty$; we can, therefore,  use Proposition
\ref{resolvebs} to obtain the identity \eqref{id01}.
\end{remark}

The definition of weak solution in the case where the vorticity
is not balanced follows naturally, and it is stated below:

\begin{definition}\label{w-g}Given $\w^0\in L_{c,per}^p(\Omega;\mathbb R)$, for
 some $p>4/3$, we say that $\w=\w(t,x)$ is a weak solution of
$\w_t+(u\cdot \nabla)\w=0$ if
\begin{itemize}
\item[(i)]$\w\in L^{\infty}(0,T; L_{per}^p(\Omega;\mathbb
R)\cap L_{per}^1(\Omega;\mathbb R))$;
\item [(ii)] the velocity field $u(t,\cdot)=\varXi [\w (t,\cdot)]$, for a.e.
$t\in [0,T]$;
\item [(iii)] for all test function $\psi \in \mathcal
C^{\infty}_c([0,T)\times\Omega;\mathbb R)$
\end{itemize}
\[\int_0^{T}\int_{\Omega} \psi_t(t,x) \w(t,x)dxdt+\int_0^{T}
\int_{\Omega}u(x,t)\cdot \nabla
\psi(x)\w(t,x)dxdt+\int_{\Omega} \psi(x,0) \w^0(x)dx=0.\]
\end{definition}

Next, we state and prove existence of a weak solution for the
vorticity equation without the integral constraint on the initial vorticity:

\begin{theorem}\label{teo2}
Given $\w^0\in L_{c,per}^p(\Omega;\mathbb R)$, for some $p>4/3$,
there exists a weak solution $\w=\w(t,x)$ of (\ref{vort}) in the sense of
Definition \ref{w-g}.
\end{theorem}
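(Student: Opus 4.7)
The strategy is to reduce to Theorem~\ref{teores} by subtracting off a smooth, steady, helical reference flow that carries the non-vanishing ``circulation at infinity'' of $\omega^0$, so that the remaining vorticity satisfies the zero-integral hypothesis required for the Biot-Savart application of Proposition~\ref{resolvebs}.

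\emph{Step 1 (reference flow).} Following the definition of $\varXi$, fix a $\varphi\in C_c^\infty((0,\infty))$ compatible with $\omega^0$, set $\bar\omega(x):=\varphi(|\tilde x|)$, and let $\bar u$ be the associated radially symmetric helical velocity. A direct computation shows that $\bar u$ is smooth, bounded, has vanishing helical swirl, and that $(\bar\omega,\bar u)$ is a time-independent solution of \eqref{vort}.

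\emph{Step 2 (smooth approximation).} Apply Theorem~\ref{app-seq} with initial datum $\omega^0$ to produce smooth global solutions $\omega_n\in L^\infty(0,\infty;L_{c,per}^p(\Omega;\mathbb R))$ of \eqref{vort} with $\omega_n^0=\rho_n*\omega^0$ and a subsequence (not relabelled) satisfying $\omega_n\stackrel{\ast}{\rightharpoonup}\omega$ in $L^\infty(0,\infty;L_{per}^q(\Omega;\mathbb R))$ for all $1<q\le p$, together with $\|\omega(t,\cdot)\|_{L^1}\le C$. Because each $\omega_n$ is smooth, $u_n:=\varXi[\omega_n(t,\cdot)]$ is the classical helical velocity with scalar vorticity $\omega_n$, and the identity in Definition~\ref{w-g}(iii) holds trivially for the pair $(\omega_n,u_n)$.

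\emph{Step 3 (decomposition and strong convergence of the velocity).} Write $u_n=\bar u+v_n$, with
\[
v_n(t,x)=\int_\Omega \mathcal K(x-y)\times\dfrac{\xi(y)}{\kappa}\bigl(\omega_n(t,y)-\bar\omega(y)\bigr)\,dy.
\]
By the choice of $\varphi$, the helical invariance that makes the planar slice integral of $\omega_n^0$ independent of $x_3$, and conservation of $\int_\Omega \omega_n(t,x)\,dx$ under the transport equation, the shifted vorticity $\omega_n-\bar\omega$ has vanishing integral over $\Omega$ for every $t$, so the Biot-Savart representation for $v_n$ is legitimate and $v_n$ is, uniformly in $n$, bounded in $L^\infty(0,\infty;L_{per}^2(\Omega;\mathbb R^3))$. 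Repeating the $L^2_{loc}$ strong-convergence argument carried out in the second half of the proof of Theorem~\ref{teores} with $\omega_n-\bar\omega$ in place of $\omega_n$, one obtains $v_n\to v$ in $L^\infty(0,T;L_{loc,per}^2(\Omega;\mathbb R^3))$ for every $T>0$, where $v:=\varXi[\omega(t,\cdot)]-\bar u$.

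\emph{Step 4 (passing to the limit).} Each $\omega_n$ satisfies
\[
\int_0^\infty\!\!\int_\Omega \psi_t\omega_n\,dxdt+\int_0^\infty\!\!\int_\Omega u_n\cdot\nabla\psi\,\omega_n\,dxdt+\int_\Omega \psi(0,x)\omega_n^0(x)\,dx=0
\]
for every $\psi\in C_{c,per}^\infty([0,\infty)\times\Omega;\mathbb R)$. The linear terms converge by weak-$\ast$ convergence of $\omega_n$. Split the nonlinear term as
\[
\int_0^\infty\!\!\int_\Omega \bar u\cdot\nabla\psi\,\omega_n\,dxdt+\int_0^\infty\!\!\int_\Omega v_n\cdot\nabla\psi\,\omega_n\,dxdt.
\]
The first piece converges by weak-$\ast$ convergence against $\bar u\cdot\nabla\psi\in L^1\cap L^\infty$; the second converges by combining the strong $L^2_{loc}$ convergence of $v_n$ from Step~3 with the weak $L^p$ convergence of $\omega_n$, since $\nabla\psi$ has compact support in $x$. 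Thus $(\omega,u)$ with $u=\varXi[\omega(t,\cdot)]$ solves Definition~\ref{w-g}.

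\emph{Main obstacle.} The hard point, inherited from Theorem~\ref{teores}, is the strong $L^2_{loc}$ convergence of the Biot-Savart correction $v_n$: the kernel decay $|\mathcal K(x)|\lesssim|\tilde x|^{-1}$ is borderline for passing to the limit with only $L^p$ control on the vorticity, and the far-field estimates rely essentially on the zero-integral condition (Lemma~\ref{lemaK}). The device of subtracting $\bar\omega$ is precisely what restores this zero-integral condition, so the new work reduces to checking that the estimates of Theorem~\ref{teores} apply verbatim to $\omega_n-\bar\omega$.
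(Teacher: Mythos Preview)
Your overall strategy---subtract a steady radial reference flow $(\bar\omega,\bar u)$ so that the residual vorticity has vanishing integral, then rerun the Biot--Savart and compactness arguments of Theorem~\ref{teores} on the residual---is precisely the paper's approach, and Steps~1, 3 and~4 match the paper's proof essentially line for line.

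The gap is in Step~2. You invoke Theorem~\ref{app-seq} on the unbalanced datum $\omega^0$, but the proof of that theorem begins by setting $u_n^0=\mathcal K\ast(\xi\,\omega_n^0/\kappa)$ and appealing to Corollary~\ref{corK} to place $u_n^0\in V^k$; both Corollary~\ref{corK} and Proposition~\ref{resolvebs} require $\int_\Omega \xi\,\omega_n^0/\kappa=0$, which fails here. Without this, $u_n^0$ does not decay as $|\tilde x|\to\infty$, is not in $L^2$, and Theorem~\ref{Yo} does not apply, so you have no smooth approximating sequence to work with. The paper resolves this by constructing the smooth solutions differently: it writes the Euler equations for the \emph{difference} velocity $\tilde u_n=u_n-\bar u$, obtaining the modified system
\[
\partial_t\tilde u+(\tilde u\cdot\nabla)\tilde u+(\tilde u\cdot\nabla)\bar u+(\bar u\cdot\nabla)\tilde u+\nabla\tilde P=0,
\]
with initial data $\tilde u_n^0=\mathcal K\ast\bigl((\omega_n^0-\phi)\,\xi/\kappa\bigr)\in V^k$ (now legitimate, since $\omega_n^0-\phi$ has zero integral). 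One then needs global well-posedness for this system via energy estimates and a Beale--Kato--Majda continuation argument analogous to that in Theorem~\ref{app-seq}, together with the observation that $\tilde u_n$ (and hence $u_n=\tilde u_n+\bar u$) remains helical with vanishing helical swirl. Only after this construction does one have smooth $\omega_n$ transported by $u_n=\varXi[\omega_n]$, and your Steps~3--4 then go through as written.
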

\begin{proof}
We have already proved the theorem in the case $\int \w^0dx=0$. Now, suppose
that $\int \w^0dx\neq 0$. Fix $\varphi \in \mathcal C^{\infty}([0,\infty))$,
with compact support in $(0,\infty)$, such that
\[\int_0^{\infty}\varphi(r)rdr=\int_{\mathbb R^2}\w^0(\tilde x,0)d\tilde x\]
and define  $\tilde \w^0=\w^0 -\phi$, where
$\phi=\phi(x)=\varphi(|\tilde x|)$.
We introduce an approximating sequence $\tilde \w^0_n$ given by
\[\tilde \w^0_n=\w^0_n-\phi,\]
where $\w^0_n$ is the sequence of mollifiers defined by $\w^0_n= \rho_n\ast
\w^0$. Observe that $\int_{\Omega}
\tilde \w^0_ndx=0$ for all $n$. Thus, for all $n$ we can define the velocity
field associated with $\w^0_n$ through the Biot-Savart law,
\[\tilde u_n^0=\mathcal K\ast \left(\tilde\w^0_n\dfrac{\xi}{\kappa}\right).\]
By Corollary \ref{corK} we have that, in particular, $\tilde
u_n^0\in\mathcal C^{\infty}(\Omega;\mathbb R^3)\cap L^2_{per}(\Omega;\mathbb
R^3)$ for all $n$. Using the standard energy estimates it is not
hard to prove that, for each $n\in\mathbb N$, the
system
\begin{eqnarray}\label{utilde}\left\{\begin{array}{ll}
\partial_t\tilde u+(\tilde u\cdot \nabla)\tilde
u+(\tilde u\cdot \nabla)\bar u+(\bar u\cdot \nabla)\tilde u+\nabla \tilde P=0\\
\mbox{div } \tilde u=0\\
\tilde u(0,x)=\tilde u_n^0(x)\\
|\tilde u|(x)\rightarrow 0 \mbox{ as } |\tilde x|\rightarrow \infty.
         \end{array}\right.
\end{eqnarray}
has a unique solution in $\mathcal C^{\infty}([0,\infty)\times\Omega;\mathbb
R^3)\cap L^\infty(0,\infty;L^2_{per}(\Omega;\mathbb R^3))$. Let $\tilde u_n\in
\mathcal C^{\infty}([0,\infty)\times\Omega;\mathbb R^3)\cap
L^\infty(0,\infty;L^2_{per}(\Omega;\mathbb R^3))$ be such solution.
It is easy to see that $\tilde u_n^0$ is helical and
has vanishing helical swirl and, therefore, $\tilde u_n$ is also
helical and has vanishing helical swirl, for all $n\in\mathbb N$. Now, set
\[u_n=\tilde u_n+\bar u\quad \mbox{ where }\quad \bar
u(x)=\left(\dfrac{\tilde x^{\perp}}{|\tilde
x|^2},\dfrac{1}{\kappa}\right)\int_0^{|\tilde
x|}\varphi(r)rdr.\]
We have that the velocity field $u_n$ satisfies the following properties:
\begin{itemize}
 \item [(i)] for each $n$, $u_n$ is a solution of the following problem
\begin{eqnarray}\label{utotal}\left\{\begin{array}{ll}
\partial_t u+(u\cdot \nabla)u+\nabla P=0\\
\mbox{div } u=0\\
u(0,x)=\tilde u_n^0(x)+\bar u(x)\\
|(u_1,u_2)|(x)\rightarrow 0 \mbox{ and } |u_3|(x)\rightarrow \beta \mbox{ as
} |\tilde x|\rightarrow \infty.
         \end{array}\right.\end{eqnarray}
where $\beta=(1/\kappa) |\int_0^\infty\varphi(r)rdr|$, $P=\tilde P+\bar P$ and
$\bar P$ is the pressure associated with the steady solution $\bar u$ of the
Euler equations;
 \item [(ii)] $u_n$ is helical and has vanishing helical swirl for all $n$;
 \item[(iii)]denote the third component of curl $u_n$ by
$\w_n$, then curl $u_n=\w_n \xi/\kappa$ for
all $n$;
\item [(iv)] $u_n(t,x)=\varXi \w_n(t,x)=\left(\mathcal
K\ast \left( (\w_n-\phi)\xi/\kappa\right)\right)(t,x)+\bar
u(x)$.
\end{itemize}

It is also clear that, for each $n\in \mathbb N$, $\w_n \in
C^\infty([0,\infty)\times \Omega)$ and
$\partial_t \w_n+(u_n\cdot\nabla)\w_n=0$. Therefore, for all $1\leq q\leq p$ we
have $\|\w_n(t)\|_{L^q(\Omega)}=\|\w_n(0)\|_{L^q(\Omega)}\leq
\|\w^0\|_{L^q}\leq C$, for all $n\in \mathbb N$ and $t\geq 0$. Then, there
exist a subsequence, which we still denote by $\{\w_n\}_n$, and a limit $\w\in
L^\infty(0,\infty;L_{per}^p(\Omega;\mathbb R)\cap L_{per}^1(\Omega;\mathbb R))$
such that $\w_n\stackrel{*}{\rightharpoonup}\w$ in
$L^\infty(0,\infty;L^p(\Omega;\mathbb R))$. Furthermore,
\[\int_0^{T}\int_{\Omega} \psi_t(t,x) \w_n(t,x)dxdt+\int_0^{T}
\int_{\Omega}u_n(t,x)\cdot \nabla \psi(x)
\w_n(t,x)dxdt+\int_{\Omega} \psi(x,0)\w^0_n(x)dx=0.\]

To show that $\w$ is a weak solution of vorticity equation in the
sense of Definition \ref{w-g}, we have to prove that we can pass to the limit on the left-hand-side of the identity above, as 
$n\rightarrow \infty$:
true
\begin{multline}\label{lasteq}
\lim_{n\rightarrow \infty}\left\{\int_0^{T}\int_{\Omega}
\psi_t(t,x) \w_n(t,x)dxdt+\int_0^{T}
\int_{\Omega}u_n(t,x)\cdot \nabla \psi(x)
\w_n(t,x)dxdt+\int_{\Omega} \psi(x,0)\w^0_n(x)dx\right\}=\\
=\int_0^{T}\int_{\Omega} \psi_t(t,x) \w(t,x)dxdt+\int_0^{T}
\int_{\Omega}u(t,x)\cdot \nabla \psi(x)
\w(t,x)dxdt+\int_{\Omega} \psi(x,0)\w^0(x)dx.
\end{multline}

The proof of relation \eqref{lasteq} follows along the same lines as the proof
of Theorem \ref{teores}, using the fact that $u_n=\varXi \w_n$.
\end{proof}

We conclude with a few final observations. First, the critical regularity for
existence of a weak solution in the 2D and axisymmetric case is $p=1$, see
\cite{delort90,vecchiwu,CI98}.  We expect helical flows to behave similarly, so
that $p> 4/3$ should
not be sharp, just a consequence of the limitations of the estimates available.
However, the
symmetrization idea that brings the existence result from $p>4/3$ to vortex
sheets in the 2D case, see
\cite{schochet},  has already been used in the proof of Theorem \ref{teores}, so
improvements have to come
from somewhere else. Some natural open problems are, therefore, proving
existence closer to vortex
sheet data, seeking an adaptation of Delort's axisymmetric result to helical
flows and formulating and
studying convergence of helical vortex methods. Another natural line of research
is to study the vanishing
viscosity limit, which is currently under investigation.

\medskip
\footnotesize
{\bf Acknowledgments:} Research of H. J. Nussenzveig Lopes is supported in part by CNPq grant \# 306331 / 2010-1, 
FAPERJ grant \# E-26/103.197/2012. Research of Milton C. Lopes Filho is supported in part by CNPq grant 
\# 303089 / 2010-5. Anne Bronzi's research is supported by Post-Doctoral grant \# 236994/2012-3.
This work was supported by FAPESP Thematic Project \# 2007/51490-7 and by the FAPESP grant 
\# 05/58136-9. The research presented here was part of Anne Bronzi's doctoral dissertation at the
mathematics graduate program of UNICAMP.

\bibliographystyle{plain}
\bibliography{references}
\end{document}